\newtheorem{theorem}{Theorem}
\theoremstyle{plain}
\newtheorem{corollary}{Corollary}
\newtheorem{definition}{Definition}
\newtheorem{proposition}{Proposition}
\newtheorem{remark}{Remark}
\numberwithin{equation}{section}
\begin{document}
\title[Existence of Weak Solutions for $p\left( .\right) $-Laplacian Equation%
]{Existence of Weak Solutions for $p\left( .\right) $-Laplacian Equation via
Compact Embeddings of the Double Weighted Variable Exponent Sobolev Spaces}
\author{Cihan UNAL}
\address{Sinop University\\
Faculty of Arts and Sciences\\
Department of Mathematics}
\email{cihanunal88@gmail.com}
\urladdr{}
\thanks{}
\author{Ismail AYDIN}
\address{Sinop University\\
Faculty of Arts and Sciences\\
Department of Mathematics}
\email{iaydin@sinop.edu.tr}
\urladdr{}
\thanks{}
\subjclass[2000]{Primary 35J35, 46E35; Secondary 35J60, 35J70}
\keywords{Weak solution, Compact embedding, $p\left( .\right) $-Laplacian,
Weighted variable exponent Sobolev spaces}
\dedicatory{}
\thanks{}

\begin{abstract}
In this study, we define double weighted variable exponent Sobolev spaces $%
W^{1,q(.),p(.)}\left( \Omega ,\vartheta _{0},\vartheta \right) $ with
respect to two different weight functions. Also, we investigate the basic
properties of this spaces. Moreover, we discuss the existence of weak
solutions for weighted Dirichlet problem of $p(.)$-Laplacian equation%
\begin{equation*}
\left\{ 
\begin{array}{cc}
-\func{div}\left( \vartheta (x)\left\vert \nabla f\right\vert
^{p(x)-2}\nabla f\right) =\vartheta _{0}(x)\left\vert f\right\vert ^{q(x)-2}f
& x\in \Omega \\ 
f=0 & x\in \partial \Omega%
\end{array}%
\right.
\end{equation*}%
under some conditions of compact embedding involving the double weighted
variable exponent Sobolev spaces.
\end{abstract}

\maketitle

\section{Introduction}

The history of potential theory begins in 17th century. Its development can
be traced to such greats as Newton, Euler, Laplace, Lagrange, Fourier,
Green, Gauss, Poisson, Dirichlet, Riemann, Weierstrass, Poincar\'{e}. We
refer to the book by Kellogg \cite{Kel} for references to some of the old
works.

Kov\'{a}\v{c}ik and R\'{a}kosn\'{\i}k \cite{K} introduced the variable
exponent Lebesgue space $L^{p\left( .\right) }(%
%TCIMACRO{\U{211d} }%
%BeginExpansion
\mathbb{R}
%EndExpansion
^{d})$ and the Sobolev space $W^{k,p(.)}\left( 
%TCIMACRO{\U{211d} }%
%BeginExpansion
\mathbb{R}
%EndExpansion
^{d}\right) $. They present some basic properties of the variable exponent
Lebesgue space $L^{p\left( .\right) }(%
%TCIMACRO{\U{211d} }%
%BeginExpansion
\mathbb{R}
%EndExpansion
^{d})$ and the Sobolev space $W^{k,p(.)}\left( 
%TCIMACRO{\U{211d} }%
%BeginExpansion
\mathbb{R}
%EndExpansion
^{d}\right) $ such as reflexivity and H\"{o}lder inequalities were obtained.
Also, Fan and Zhao \cite{Fan} present important results for the variable
exponent Lebesgue and Sobolev spaces. The study of electrorheological fluids
is one of the important areas where these spaces have found applications,
see \cite{Ruz}. As an another area, we can say the study of variational
integrals with non-standard growth, see \cite{Ac}, \cite{Zhi}. The
boundedness of the maximal operator was an open problem in $L^{p\left(
.\right) }(%
%TCIMACRO{\U{211d} }%
%BeginExpansion
\mathbb{R}
%EndExpansion
^{d})$ for a long time. Diening \cite{Dien} proved the first time this state
over bounded domains if $p\left( .\right) $ satisfies locally log-H\"{o}lder
continuous condition, that is,%
\begin{equation*}
\left\vert p\left( x\right) -p\left( y\right) \right\vert \leq \frac{C}{-\ln
\left\vert x-y\right\vert }\text{, }x,y\in \Omega ,\text{ }\left\vert
x-y\right\vert \leq \frac{1}{2}
\end{equation*}%
where $\Omega $ is a bounded domain. We denote by $P^{\log }\left( 
%TCIMACRO{\U{211d} }%
%BeginExpansion
\mathbb{R}
%EndExpansion
^{d}\right) $ the class of variable exponents which satisfy the log-H\"{o}%
lder continuous condition. Diening later extended the result to unbounded
domains by supposing, in addition, that the exponent $p\left( .\right) =p$
is a constant function outside a large ball. After this study, many
absorbing and crucial papers appeared in non-weighted and weighted variable
exponent spaces. For a historical journey, we refer \cite{Cruz}, \cite{Dien4}
and references therein.

The operator $-\Delta _{p\left( .\right) }f=-\func{div}\left( \left\vert
\nabla f\right\vert ^{p\left( .\right) -2}\nabla f\right) $ is called $%
p\left( .\right) $-Laplacian. The study of differential equations and
variational problems with $p\left( .\right) $-growth conditions arouses much
interest with the development of elastic mechanics, electrorheological fluid
dynamics and image processing etc. We refer the readers \cite{Gold}, \cite%
{Kim}, \cite{MasO}, \cite{Ohno}, \cite{Sai}, \cite{UnalAy} and references
therein. In general, the methods used in these works are base on continuous
and compact embeddings between Lebesgue and Sobolev spaces.

In 2003, Fan and Zhang obtained a weak solution in $W_{0}^{1,p\left(
.\right) }\left( \Omega \right) $ to the Dirichlet problem of $p\left(
.\right) $-Laplacian%
\begin{equation*}
\left\{ 
\begin{array}{cc}
-\func{div}\left( \left\vert \nabla u\right\vert ^{p\left( x\right)
-2}\nabla u\right) =f\left( x,u\right) & x\in \Omega \\ 
u=0 & x\in \partial \Omega%
\end{array}%
\right.
\end{equation*}%
where $f:\Omega \times 
%TCIMACRO{\U{211d} }%
%BeginExpansion
\mathbb{R}
%EndExpansion
\longrightarrow 
%TCIMACRO{\U{211d} }%
%BeginExpansion
\mathbb{R}
%EndExpansion
$ is a Carath\'{e}odory function which satisfies the growth condition, see 
\cite{FanZ}. Moreover, in recent years, $p\left( .\right) $-Laplacian
equations and variational problems with $p\left( .\right) $-growth
conditions have been studied by several authors, see \cite{Bui}, \cite{FanZ}%
, \cite{Ilias}, \cite{Kim}, \cite{LiuLiu}, \cite{MasO}.

In \cite{Gold}, the authors deal with two Dirichlet boundary value problems
involving the weighted $p$-Laplacian. They give existence and multiplicity
results under suitable conditions\ in constant exponent case. One of the our
purpose is to extend to variable case of some of the results in \cite{Gold}.

In this study, we present and investigate double weighted variable exponent
Sobolev spaces $W^{1,q(.),p(.)}\left( \Omega ,\vartheta _{0},\vartheta
\right) $ with respect to two different weight functions. The main purpose
of this paper is to study the existence of weak solutions of $p(.)$%
-Laplacian problem%
\begin{equation}
\left\{ 
\begin{array}{cc}
-\func{div}\left( \vartheta (x)\left\vert \nabla f\right\vert
^{p(x)-2}\nabla f\right) =\vartheta _{0}(x)\left\vert f\right\vert ^{q(x)-2}f
& x\in \Omega \\ 
f=0 & x\in \partial \Omega%
\end{array}%
\right.  \label{P1}
\end{equation}%
for $f\in W_{0}^{1,q(.),p(.)}\left( \Omega ,\vartheta _{0},\vartheta \right) 
$ where $\Omega \subset 
%TCIMACRO{\U{211d} }%
%BeginExpansion
\mathbb{R}
%EndExpansion
^{d}$ is a bounded domain. Moreover, we discuss the necessary conditions for
existence of weak solutions for (\ref{P1}) involving the Poincar\'{e}
inequality in $W^{1,q(.),p(.)}\left( \Omega ,\vartheta _{0},\vartheta
\right) $, several continuous and compact embeddings.

\section{Notation and Preliminaries}

In this paper, we will work on $\Omega $ with Lebesgue measure $dx$. Also,
the elements of the space $C_{0}^{\infty }\left( \Omega \right) $ are the
infinitely differentiable functions with compact support. A normed space $%
\left( X,\left\Vert .\right\Vert _{X}\right) $ is called a Banach function
space (shortly BF- space), if Banach space $\left( X,\left\Vert .\right\Vert
_{X}\right) $ is continuously embedded into $L_{loc}^{1}\left( \Omega
\right) ,$ briefly $X\hookrightarrow L_{loc}^{1}\left( \Omega \right) ,$
i.e. for any compact subset $K\subset \Omega $ there is some constant $%
c_{K}>0$ such that $\left\Vert f\chi _{K}\right\Vert _{L^{1}\left( \Omega
\right) }\leq c_{K}\left\Vert f\right\Vert _{X}$ for every $f\in X.$
Moreover, a normed space $X$ is compactly embedded in a normed space $Y,$
briefly $X\hookrightarrow \hookrightarrow Y,$ if $X\hookrightarrow Y$ and
the identity operator $I:X\longrightarrow Y$ is compact, equivalently, $I$
maps every bounded sequence $\left( x_{i}\right) _{i\in 
%TCIMACRO{\U{2115} }%
%BeginExpansion
\mathbb{N}
%EndExpansion
}$ into a sequence $\left( I\left( x_{i}\right) \right) _{i\in 
%TCIMACRO{\U{2115} }%
%BeginExpansion
\mathbb{N}
%EndExpansion
}$ that contains a subsequence converging in $Y.$ Suppose that $X$ and $Y$
are two Banach spaces and $X$ is reflexive. Then $I:X\longrightarrow Y$ is a
compact operator if and only if $I$ maps weakly convergent sequences in $X$
onto convergent sequences in $Y.$ More details can be found in \cite{Ada}.

Let $\Omega \subset 
%TCIMACRO{\U{211d} }%
%BeginExpansion
\mathbb{R}
%EndExpansion
^{d}$ is bounded and $\vartheta $ is a weight function$.$ It is known that a
function $f\in C_{0}^{\infty }\left( \Omega \right) $ satisfies Poincar\'{e}
inequality in $L_{\vartheta }^{1}(\Omega )$ if and only if the inequality%
\begin{equation*}
\dint\limits_{\Omega }\left\vert f(x)\right\vert \vartheta \left( x\right)
dx\leq c\left( diam\left( \Omega \right) \right) \dint\limits_{\Omega
}\left\vert \nabla f(x)\right\vert \vartheta \left( x\right) dx
\end{equation*}%
holds, see \cite{Hei}.

\begin{definition}
Let $\Omega \subset 
%TCIMACRO{\U{211d} }%
%BeginExpansion
\mathbb{R}
%EndExpansion
^{d},$ $d\geq 1,$ be a domain with non-empty boundary $\partial \Omega $,
denote%
\begin{equation*}
L_{+}^{\infty }\left( \Omega \right) =\left\{ p\left( .\right) \in L^{\infty
}\left( \Omega \right) :\underset{x\in \Omega }{\text{essinf}}p(x)>1\right\}
\end{equation*}%
For a measurable function $p\left( .\right) :\Omega \longrightarrow \lbrack
1,\infty )$ (called a variable exponent on $\Omega $) by the symbol $P\left(
\Omega \right) $. In this paper, the function $p(.)$ always denotes a
variable exponent. For $p\left( .\right) \in P\left( \Omega \right) ,$ we
put 
\begin{equation*}
p^{-}=\underset{x\in \Omega }{\text{essinf}}p(x)\text{, \ \ \ \ \ \ }p^{+}=%
\underset{x\in \Omega }{\text{esssup}}p(x)\text{.}
\end{equation*}%
The variable exponent Lebesgue spaces $L^{p(.)}(\Omega )$ consist of all
measurable functions $f$ such that $\varrho _{p(.)}(\lambda f)<\infty $ for
some $\lambda >0$, equipped with the Luxemburg norm%
\begin{equation*}
\left\Vert f\right\Vert _{L^{p(.)}(\Omega )}=\inf \left\{ \lambda >0:\varrho
_{p(.)}\left( \frac{f}{\lambda }\right) \leq 1\right\} \text{,}
\end{equation*}%
where 
\begin{equation*}
\varrho _{p(.)}(f)=\dint\limits_{\Omega }\left\vert f(x)\right\vert ^{p(x)}dx%
\text{.}
\end{equation*}%
Let $p^{+}<\infty $. Then $f\in L^{p(.)}(\Omega )$ if and only if $\varrho
_{p(.)}(f)<\infty $. The space $L^{p(.)}\left( \Omega \right) $ is a Banach
space with respect to $\left\Vert .\right\Vert _{L^{p(.)}(\Omega )}$. If $%
p\left( .\right) =p$ is a constant function, then the norm $\left\Vert
.\right\Vert _{L^{p(.)}(\Omega )}$ coincides with the usual Lebesgue norm $%
\left\Vert .\right\Vert _{p}$, see \cite{K}. In this paper, we assume that
all variable exponents are belong to $L_{+}^{\infty }\left( \Omega \right) $.
\end{definition}

\begin{definition}
A measurable and locally integrable function $\vartheta :\Omega
\longrightarrow \left( 0,\infty \right) $ is called a weight function. We
say that $\vartheta _{1}\prec \vartheta _{2}$ if only if there exists $c>0$
such that $\vartheta _{1}(x)\leq c\vartheta _{2}(x)$ for all $x\in \Omega $.
Now, we denote%
\begin{equation*}
W\left( \Omega \right) =\left\{ \vartheta \in L_{loc}^{1}\left( \Omega
\right) :\vartheta >0\text{ almost everywhere in }\Omega \right\} .
\end{equation*}%
For $p\left( .\right) \in L_{+}^{\infty }\left( \Omega \right) ,$ we define%
\begin{equation*}
W_{p(.)}\left( \Omega \right) =\left\{ \vartheta \in W\left( \Omega \right)
:\vartheta ^{-\frac{1}{p(.)-1}}\in L_{loc}^{1}\left( \Omega \right) \right\}
.
\end{equation*}%
Moreover, for $p\left( .\right) \in L_{+}^{\infty }\left( \Omega \right) $
and $\vartheta \in W\left( \Omega \right) $, we consider the weighted
variable exponent Lebesgue space%
\begin{equation*}
L^{p(.)}(\Omega ,\vartheta )=\left\{ f\left\vert f:\Omega \longrightarrow 
%TCIMACRO{\U{211d} }%
%BeginExpansion
\mathbb{R}
%EndExpansion
\text{ measurable and }\dint\limits_{\Omega }\left\vert f(x)\right\vert
^{p(x)}\vartheta (x)dx<\infty \right. \right\}
\end{equation*}%
with the Luxemburg norm 
\begin{equation*}
\left\Vert f\right\Vert _{L^{p(.)}(\Omega ,\vartheta )}=\inf \left\{ \lambda
>0:\varrho _{p(.),\vartheta }\left( \frac{f}{\lambda }\right)
=\dint\limits_{\Omega }\left\vert \frac{f(x)}{\lambda }\right\vert
^{p(x)}\vartheta (x)dx\leq 1\right\} .
\end{equation*}%
The space $L^{p(.)}(\Omega ,\vartheta )$ is a Banach space with respect to $%
\left\Vert .\right\Vert _{L^{p(.)}(\Omega ,\vartheta )}.$ Moreover, $f\in
L^{p(.)}(\Omega ,\vartheta )$ if and only if $\left\Vert f\right\Vert
_{L^{p(.)}(\Omega ,\vartheta )}=\left\Vert f\vartheta ^{\frac{1}{p(.)}%
}\right\Vert _{L^{p(.)}(\Omega )}<\infty $. It is known that we have the
relationships between the modular $\varrho _{p(.),\vartheta }(.)$ and the
norm $\left\Vert .\right\Vert _{L^{p(.)}(\Omega ,\vartheta )}$ are as follows%
\begin{equation*}
\min \left\{ \varrho _{p(.),\vartheta }(f)^{\frac{1}{p^{-}}},\varrho
_{p(.),\vartheta }(f)^{\frac{1}{p^{+}}}\right\} \leq \left\Vert f\right\Vert
_{L^{p(.)}(\Omega ,\vartheta )}\leq \max \left\{ \varrho _{p(.),\vartheta
}(f)^{\frac{1}{p^{-}}},\varrho _{p(.),\vartheta }(f)^{\frac{1}{p^{+}}%
}\right\}
\end{equation*}%
and%
\begin{equation*}
\min \left\{ \left\Vert f\right\Vert _{L^{p(.)}(\Omega ,\vartheta
)}^{p^{-}},\left\Vert f\right\Vert _{L^{p(.)}(\Omega ,\vartheta
)}^{p^{+}}\right\} \leq \varrho _{p(.),\vartheta }(f)\leq \max \left\{
\left\Vert f\right\Vert _{L^{p(.)}(\Omega ,\vartheta )}^{p^{-}},\left\Vert
f\right\Vert _{L^{p(.)}(\Omega ,\vartheta )}^{p^{+}}\right\}
\end{equation*}%
Also, if $0<C\leq \vartheta $, then we have $L^{p(.)}(\Omega ,\vartheta
)\hookrightarrow L^{p(.)}(\Omega ),$ since one easily sees that 
\begin{equation*}
C\dint\limits_{\Omega }\left\vert f(x)\right\vert ^{p(x)}dx\leq
\dint\limits_{\Omega }\left\vert f(x)\right\vert ^{p(x)}\vartheta (x)dx
\end{equation*}%
and $C\left\Vert f\right\Vert _{L^{p(.)}(\Omega )}\leq \left\Vert
f\right\Vert _{L^{p(.)}(\Omega ,\vartheta )}$. Moreover, the dual space of $%
L^{p(.)}(\Omega ,\vartheta )$ is $L^{p^{\prime }(.)}(\Omega ,\vartheta
^{\ast })$, where $\frac{1}{p(.)}+\frac{1}{p^{\prime }(.)}=1$ and $\vartheta
^{\ast }=\vartheta ^{1-p^{\prime }\left( .\right) }=\vartheta ^{-\frac{1}{%
p(.)-1}}.$ For more details, we refer \cite{A1}, \cite{A2} and \cite{Kok}.
\end{definition}

\begin{theorem}
(see \cite{A2})If $\vartheta \in W_{p(.)}\left( \Omega \right) $, then $%
L^{p(.)}(\Omega ,\vartheta )\hookrightarrow L_{loc}^{1}\left( \Omega \right)
\hookrightarrow D^{\shortmid }(\Omega )$, that is, every function in $%
L^{p(.)}(\Omega ,\vartheta )$ has distributional (weak) derivative, where $%
D^{\shortmid }(\Omega )$ is distribution space.
\end{theorem}

\begin{remark}
\label{embed}(see \cite{Kuf})If $\vartheta \notin W_{p(.)}\left( \Omega
\right) $, then the embedding $L^{p(.)}(\Omega ,\vartheta )\hookrightarrow
L_{loc}^{1}\left( \Omega \right) $ need not hold.
\end{remark}

Remark \ref{embed} says that the assumption $\vartheta \in W_{p(.)}\left(
\Omega \right) $ is necessary for distributional (weak) derivative
techniques.

\begin{proposition}
Assume that $\vartheta \in W_{p(.)}\left( \Omega \right) $, $\phi \in
C_{0}^{\infty }\left( \Omega \right) $. Also, let a multi-index $\alpha \in 
%TCIMACRO{\U{2115} }%
%BeginExpansion
\mathbb{N}
%EndExpansion
_{0}^{d}$ be fixed. Then, the formula%
\begin{equation*}
L_{\alpha }(f)=\dint\limits_{\Omega }fD^{\alpha }\phi dx,\text{ }f\in
L^{p(.)}(\Omega ,\vartheta )
\end{equation*}%
defines a continuous linear functional $L_{\alpha }$ on $L^{p(.)}(\Omega
,\vartheta )$ where $C_{0}^{\infty }\left( \Omega \right) $ is the space of $%
C^{\infty }\left( \Omega \right) $ functions with compact support in $\Omega 
$.
\end{proposition}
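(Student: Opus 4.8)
The plan is to verify linearity (immediate from linearity of the Lebesgue integral and of the differential operator $D^{\alpha }$) and then to prove the continuity estimate $\left\vert L_{\alpha }(f)\right\vert \leq C\left\Vert f\right\Vert _{L^{p(.)}(\Omega ,\vartheta )}$ for every $f\in L^{p(.)}(\Omega ,\vartheta )$ by a weighted H\"{o}lder argument (which simultaneously shows that $L_{\alpha }$ is well defined, i.e. that $fD^{\alpha }\phi \in L^{1}(\Omega )$). First I would factor the integrand as $fD^{\alpha }\phi =\big(f\vartheta ^{1/p(.)}\big)\big(D^{\alpha }\phi \,\vartheta ^{-1/p(.)}\big)$ and apply H\"{o}lder's inequality in the (unweighted) variable exponent spaces $L^{p(.)}(\Omega )$ and $L^{p^{\prime }(.)}(\Omega )$, with $\frac{1}{p(.)}+\frac{1}{p^{\prime }(.)}=1$, to obtain
\[
\left\vert L_{\alpha }(f)\right\vert \leq \dint\limits_{\Omega }\left\vert f\right\vert \left\vert D^{\alpha }\phi \right\vert dx\leq C\big\Vert f\vartheta ^{1/p(.)}\big\Vert _{L^{p(.)}(\Omega )}\,\big\Vert D^{\alpha }\phi \,\vartheta ^{-1/p(.)}\big\Vert _{L^{p^{\prime }(.)}(\Omega )}.
\]
By the identity $\left\Vert f\right\Vert _{L^{p(.)}(\Omega ,\vartheta )}=\big\Vert f\vartheta ^{1/p(.)}\big\Vert _{L^{p(.)}(\Omega )}$ recorded in the definition of $L^{p(.)}(\Omega ,\vartheta )$, the first factor is exactly $\left\Vert f\right\Vert _{L^{p(.)}(\Omega ,\vartheta )}$, so everything reduces to showing that the second factor is a finite constant depending only on $\phi $ and $\vartheta $.

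To that end let $K=\mathrm{supp}\,\phi \subset \Omega $, a compact set, and recall that $D^{\alpha }\phi $ is continuous with support in $K$, hence bounded. Since $p(.)\in L_{+}^{\infty }(\Omega )$ we have $p^{-}>1$, so $p^{\prime }(.)$ is bounded; therefore it suffices to show that the modular
\[
\varrho _{p^{\prime }(.)}\big(D^{\alpha }\phi \,\vartheta ^{-1/p(.)}\big)=\dint\limits_{K}\left\vert D^{\alpha }\phi (x)\right\vert ^{p^{\prime }(x)}\vartheta (x)^{-p^{\prime }(x)/p(x)}dx
\]
is finite. Here I would use the identity $\frac{p^{\prime }(x)}{p(x)}=\frac{1}{p(x)-1}$, so that $\vartheta ^{-p^{\prime }(.)/p(.)}=\vartheta ^{-1/(p(.)-1)}$, together with the boundedness of $\left\vert D^{\alpha }\phi \right\vert ^{p^{\prime }(.)}$ on $K$ (valid because $D^{\alpha }\phi $ is bounded and $p^{\prime }(.)$ has finite range), to bound the above integral by $C\dint\limits_{K}\vartheta (x)^{-1/(p(x)-1)}dx$. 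The hypothesis $\vartheta \in W_{p(.)}(\Omega )$ is precisely the statement that $\vartheta ^{-1/(p(.)-1)}\in L_{loc}^{1}(\Omega )$, so this last integral over the compact set $K$ is finite; consequently the modular is finite and, $p^{\prime }(.)$ being bounded, $D^{\alpha }\phi \,\vartheta ^{-1/p(.)}\in L^{p^{\prime }(.)}(\Omega )$ with finite norm. Inserting this into the displayed chain of inequalities gives $\left\vert L_{\alpha }(f)\right\vert \leq C(\phi ,\vartheta )\left\Vert f\right\Vert _{L^{p(.)}(\Omega ,\vartheta )}$, which is the asserted continuity.

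The genuinely essential step is the passage $\vartheta ^{-p^{\prime }(.)/p(.)}=\vartheta ^{-1/(p(.)-1)}$ and the appeal to $\vartheta \in W_{p(.)}(\Omega )$ to control this quantity on $\mathrm{supp}\,\phi $; this is the only place the hypotheses enter in an indispensable way, and is the main (albeit short) obstacle. The remaining ingredients --- H\"{o}lder's inequality for variable exponents and the modular--norm relations --- are standard, and I expect no difficulty beyond bookkeeping with the exponents, in particular keeping track of whether one argues with the modular or with the norm.
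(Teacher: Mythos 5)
Your argument is correct and follows essentially the same route as the paper: factor the integrand as $\bigl(f\vartheta^{1/p(.)}\bigr)\bigl(\vartheta^{-1/p(.)}D^{\alpha}\phi\bigr)$, apply the variable exponent H\"older inequality, and identify the first factor with $\left\Vert f\right\Vert_{L^{p(.)}(\Omega,\vartheta)}$. The only difference is that you spell out, via the modular and the identity $p^{\prime}(.)/p(.)=1/(p(.)-1)$ together with $\vartheta^{-1/(p(.)-1)}\in L_{loc}^{1}(\Omega)$ on $\mathrm{supp}\,\phi$, the finiteness of the second factor, which the paper absorbs into the constant without comment.
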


\begin{proof}
If we denote $Q=$supp$\phi $, then we have $Q=\overline{Q}$. By the H\"{o}%
lder inequality for $L^{p(.)}(\Omega ,\vartheta ),$ we get%
\begin{eqnarray*}
\left\vert L_{\alpha }(f)\right\vert &\leq &\dint\limits_{\Omega }\left\vert
f\right\vert \vartheta ^{\frac{1}{p(.)}}\vartheta ^{-\frac{1}{p(.)}%
}\left\vert D^{\alpha }\phi \right\vert dx \\
&\leq &c\left\Vert f\vartheta ^{\frac{1}{p(.)}}\right\Vert _{L^{p(.)}(\Omega
)}\left\Vert \vartheta ^{-\frac{1}{p(.)}}D^{\alpha }\phi \right\Vert
_{L^{q(.)}(\Omega )} \\
&\leq &C\left\Vert f\right\Vert _{L^{p(.)}(\Omega ,\vartheta )}<\infty .
\end{eqnarray*}%
where $\frac{1}{p(.)}+\frac{1}{q(.)}=1.$
\end{proof}

\begin{definition}
We set the weighted variable Sobolev spaces $W^{k,p(.)}\left( \Omega
,\vartheta \right) $ by%
\begin{equation*}
W^{k,p(.)}\left( \Omega ,\vartheta \right) =\left\{ f\in L^{p(.)}(\Omega
,\vartheta ):D^{\alpha }f\in L^{p(.)}(\Omega ,\vartheta ),0\leq \left\vert
\alpha \right\vert \leq k\right\}
\end{equation*}%
equipped with the norm 
\begin{equation*}
\left\Vert f\right\Vert _{W^{k,p(.)}\left( \Omega ,\vartheta \right)
}=\dsum\limits_{0\leq \left\vert \alpha \right\vert \leq k}\left\Vert
D^{\alpha }f\right\Vert _{L^{p(.)}(\Omega ,\vartheta )}
\end{equation*}%
where $\alpha \in 
%TCIMACRO{\U{2115} }%
%BeginExpansion
\mathbb{N}
%EndExpansion
_{0}^{d}$ is a multi-index, $\left\vert \alpha \right\vert =\alpha
_{1}+\alpha _{2}+...+\alpha _{d}$ and $D^{\alpha }=\frac{\partial
^{\left\vert \alpha \right\vert }}{\partial _{x_{1}}^{\alpha
_{1}}...\partial _{x_{d}}^{\alpha _{d}}}$. It can be shown that $%
W^{k,p(.)}\left( \Omega ,\vartheta \right) $ is a reflexive Banach space. In
particular, the space $W^{1,p(.)}\left( \Omega ,\vartheta \right) $ is
defined by 
\begin{equation*}
W^{1,p(.)}\left( \Omega ,\vartheta \right) =\left\{ f\in L^{p(.)}(\Omega
,\vartheta ):\left\vert \nabla f\right\vert \in L^{p(.)}(\Omega ,\vartheta
)\right\} .
\end{equation*}%
The function $\varrho _{1,p(.),\vartheta }:W^{1,p(.)}\left( \Omega
,\vartheta \right) \longrightarrow \left[ 0,\infty \right) $ is shown as $%
\varrho _{1,p(.),\vartheta }(f)=\varrho _{p(.),\vartheta }(f)+\varrho
_{p(.),\vartheta }\left( \nabla f\right) $. Also, the norm $\left\Vert
f\right\Vert _{W^{1,p(.)}\left( \Omega ,\vartheta \right) }=\left\Vert
f\right\Vert _{L^{p(.)}(\Omega ,\vartheta )}+\left\Vert \nabla f\right\Vert
_{L^{p(.)}(\Omega ,\vartheta )}$ makes the space $W^{1,p(.)}\left( \Omega
,\vartheta \right) $ a Banach space. If the exponent $p\left( .\right) $
satisfies locally log-H\"{o}lder continuous condition, then a lot of
regularities for variable exponent spaces holds. Because, the space $%
C_{0}^{\infty }\left( \Omega \right) $ is dense in $W^{1,p(.)}\left( \Omega
,\vartheta \right) $ under the circumstances, see \cite{A2}. More
information on the classic theory of variable exponent spaces can be found
in \cite{Dien4} and \cite{K}.
\end{definition}

Throughout this paper, we assume that $1<q^{-}\leq q\left( .\right) \leq
q^{+}<p^{-}\leq p\left( .\right) \leq p^{+}<\lambda <\infty $, $\vartheta
\in W_{p(.)}\left( \Omega \right) $ and $\vartheta _{0}\in W_{q(.)}\left(
\Omega \right) $ where $\Omega \subset 
%TCIMACRO{\U{211d} }%
%BeginExpansion
\mathbb{R}
%EndExpansion
^{d}$ is a bounded domain.

\section{Main Results}

Before we consider the existence of weak solutions of (\ref{P1}), we present
and investigate the double weighted variable exponent Sobolev spaces.

\begin{definition}
The double weighted variable exponent Sobolev spaces $W^{1,q(.),p(.)}\left(
\Omega ,\vartheta _{0},\vartheta \right) $ is defined by%
\begin{equation*}
W^{1,q(.),p(.)}\left( \Omega ,\vartheta _{0},\vartheta \right) =\left\{ f\in
L^{q(.)}(\Omega ,\vartheta _{0}):\left\vert \nabla f\right\vert \in
L^{p(.)}(\Omega ,\vartheta )\right\}
\end{equation*}%
or%
\begin{equation*}
W^{1,q(.),p(.)}\left( \Omega ,\vartheta _{0},\vartheta \right) =\left\{ f\in
L^{q(.)}(\Omega ,\vartheta _{0}):\frac{\partial f}{\partial x_{i}}\in
L^{p(.)}(\Omega ,\vartheta )\text{, }\forall i=1,2,..,d\right\}
\end{equation*}%
equipped with the norm%
\begin{equation*}
\left\Vert f\right\Vert _{W^{1,q(.),p(.)}\left( \Omega ,\vartheta
_{0},\vartheta \right) }=\left\Vert f\right\Vert _{L^{q(.)}(\Omega
,\vartheta _{0})}+\left\Vert \nabla f\right\Vert _{L^{p(.)}(\Omega
,\vartheta )}.
\end{equation*}%
Since $\vartheta \in W_{p(.)}\left( \Omega \right) $ and $\vartheta _{0}\in
W_{q(.)}\left( \Omega \right) ,$ it can be seen that $L^{p(.)}(\Omega
,\vartheta )\subset L_{loc}^{1}\left( \Omega \right) $ and $L^{q(.)}(\Omega
,\vartheta _{0})\subset L_{loc}^{1}\left( \Omega \right) .$ Therefore, the
double weighted variable exponent Sobolev spaces $W^{1,q(.),p(.)}\left(
\Omega ,\vartheta _{0},\vartheta \right) $ is well-defined.
\end{definition}

Now, we will give some basic properties of $W^{1,q(.),p(.)}\left( \Omega
,\vartheta _{0},\vartheta \right) $.

\begin{proposition}
The space $W^{1,q(.),p(.)}\left( \Omega ,\vartheta _{0},\vartheta \right) $
is a Banach space with respect to the norm $\left\Vert .\right\Vert
_{W^{1,q(.),p(.)}\left( \Omega ,\vartheta _{0},\vartheta \right) }$.
\end{proposition}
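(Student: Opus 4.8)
The plan is the standard completeness argument for Sobolev-type spaces: take a Cauchy sequence, use the completeness of the constituent weighted Lebesgue spaces to get candidate limits, and then identify the limit of the gradients with the gradient of the limit via the distributional derivative. First I would let $\left( f_{n}\right) _{n\in \mathbb{N}}$ be a Cauchy sequence in $W^{1,q(.),p(.)}\left( \Omega ,\vartheta _{0},\vartheta \right) $. From the definition of the norm, $\left\Vert f_{n}-f_{m}\right\Vert _{L^{q(.)}(\Omega ,\vartheta _{0})}\leq \left\Vert f_{n}-f_{m}\right\Vert _{W^{1,q(.),p(.)}}$ and $\left\Vert \nabla f_{n}-\nabla f_{m}\right\Vert _{L^{p(.)}(\Omega ,\vartheta )}\leq \left\Vert f_{n}-f_{m}\right\Vert _{W^{1,q(.),p(.)}}$, so $\left( f_{n}\right) $ is Cauchy in $L^{q(.)}(\Omega ,\vartheta _{0})$ and each $\left( \partial f_{n}/\partial x_{i}\right) $ is Cauchy in $L^{p(.)}(\Omega ,\vartheta )$. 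Since both of these weighted variable exponent Lebesgue spaces are Banach spaces (established in the Definition of $L^{p(.)}(\Omega ,\vartheta )$), there exist $f\in L^{q(.)}(\Omega ,\vartheta _{0})$ and $g_{i}\in L^{p(.)}(\Omega ,\vartheta )$ for $i=1,\dots ,d$ with $f_{n}\rightarrow f$ in $L^{q(.)}(\Omega ,\vartheta _{0})$ and $\partial f_{n}/\partial x_{i}\rightarrow g_{i}$ in $L^{p(.)}(\Omega ,\vartheta )$.

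The key step is to show $g_{i}=\partial f/\partial x_{i}$ in the distributional sense. Here I would invoke the Proposition above: since $\vartheta \in W_{p(.)}\left( \Omega \right) $ and $\vartheta _{0}\in W_{q(.)}\left( \Omega \right) $, for each $\phi \in C_{0}^{\infty }\left( \Omega \right) $ the functionals $h\mapsto \int_{\Omega }h\,D^{\alpha }\phi \,dx$ are continuous on $L^{q(.)}(\Omega ,\vartheta _{0})$ and on $L^{p(.)}(\Omega ,\vartheta )$ respectively (this is exactly the content of the Proposition, using $\vartheta \in W_{p(.)}(\Omega )$, so both spaces embed into $L_{loc}^{1}$). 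Hence, starting from the identity $\int_{\Omega }\left( \partial f_{n}/\partial x_{i}\right) \phi \,dx=-\int_{\Omega }f_{n}\,\left( \partial \phi /\partial x_{i}\right) dx$, I pass to the limit on the left using continuity of the functional on $L^{p(.)}(\Omega ,\vartheta )$ and on the right using continuity on $L^{q(.)}(\Omega ,\vartheta _{0})$, obtaining $\int_{\Omega }g_{i}\phi \,dx=-\int_{\Omega }f\,\left( \partial \phi /\partial x_{i}\right) dx$ for all $\phi \in C_{0}^{\infty }\left( \Omega \right) $. Therefore $g_{i}=\partial f/\partial x_{i}$ weakly, which in particular shows $\left\vert \nabla f\right\vert \in L^{p(.)}(\Omega ,\vartheta )$, so $f\in W^{1,q(.),p(.)}\left( \Omega ,\vartheta _{0},\vartheta \right) $.

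Finally, I would conclude by combining the two convergences: $\left\Vert f_{n}-f\right\Vert _{W^{1,q(.),p(.)}}=\left\Vert f_{n}-f\right\Vert _{L^{q(.)}(\Omega ,\vartheta _{0})}+\left\Vert \nabla f_{n}-\nabla f\right\Vert _{L^{p(.)}(\Omega ,\vartheta )}\rightarrow 0$, where $\left\Vert \nabla f_{n}-\nabla f\right\Vert _{L^{p(.)}(\Omega ,\vartheta )}$ is controlled by $\sum_{i=1}^{d}\left\Vert \partial f_{n}/\partial x_{i}-g_{i}\right\Vert _{L^{p(.)}(\Omega ,\vartheta )}$ (up to a dimensional constant from the equivalence of norms on $\mathbb{R}^{d}$). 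Thus every Cauchy sequence converges in the space. The main obstacle — such as it is — is the distributional identification step, i.e. justifying the passage to the limit inside the duality pairing; this is handled cleanly by the continuity of the functionals $L_{\alpha }$ from the Proposition, which is precisely why the hypotheses $\vartheta \in W_{p(.)}(\Omega )$ and $\vartheta _{0}\in W_{q(.)}(\Omega )$ are imposed. The rest is bookkeeping with the norm.
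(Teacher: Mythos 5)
Your proposal is correct and follows essentially the same route as the paper: both extract limits $f$ and $g_i$ from completeness of $L^{q(.)}(\Omega ,\vartheta _{0})$ and $L^{p(.)}(\Omega ,\vartheta )$, then identify $g_i=\partial f/\partial x_i$ distributionally by passing to the limit in $\int_{\Omega }(\partial f_{n}/\partial x_{i})\phi \,dx=-\int_{\Omega }f_{n}(\partial \phi /\partial x_{i})\,dx$, justified by the hypotheses $\vartheta \in W_{p(.)}(\Omega )$, $\vartheta _{0}\in W_{q(.)}(\Omega )$. The only cosmetic difference is that you cite the earlier Proposition on the functionals $L_{\alpha }$, whereas the paper rewrites the same H\"older estimate explicitly inside the proof.
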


\begin{proof}
Let $\left( f_{n}\right) $ be a Cauchy sequence in $W^{1,q(.),p(.)}\left(
\Omega ,\vartheta _{0},\vartheta \right) $. Then, $\left( f_{n}\right) $ and 
$\left( \frac{\partial f_{n}}{\partial x_{i}}\right) $ are Cauchy sequences
in $L^{q(.)}(\Omega ,\vartheta _{0})$ and $L^{p(.)}(\Omega ,\vartheta )$,
respectively. Since the spaces $L^{q(.)}(\Omega ,\vartheta _{0})$ and $%
L^{p(.)}(\Omega ,\vartheta )$ are Banach spaces, the sequence $\left(
f_{n}\right) $ converges to some $f$ in $L^{q(.)}(\Omega ,\vartheta _{0})$,
and the sequence $\left( \frac{\partial f_{n}}{\partial x_{i}}\right) $
converges to some $v_{i}$ in $L^{p(.)}(\Omega ,\vartheta )$ for $i=1,2,..,d.$
Hence, we have $f,v_{i}\in L_{loc}^{1}\left( \Omega \right) $, which are
seen as distributions. Now, we will show that each $v_{i}$ coincides with $%
\frac{\partial f}{\partial x_{i}}$ in the distributional sense. For every $%
\phi \in C_{0}^{\infty }\left( \Omega \right) $, by the H\"{o}lder
inequality, we have%
\begin{eqnarray*}
\left\vert \dint\limits_{\Omega }f_{n}\phi dx-\dint\limits_{\Omega }f\phi
dx\right\vert &\leq &\left\vert \dint\limits_{\Omega }\left\vert
f_{n}-f\right\vert \left\vert \phi \right\vert dx\right\vert \\
&=&\left\vert \dint\limits_{\Omega }\left\vert f_{n}-f\right\vert \vartheta
_{0}^{\frac{1}{q(x)}}\vartheta _{0}^{-\frac{1}{q(x)}}\left\vert \phi
\right\vert dx\right\vert \\
&\leq &C\left\Vert f_{n}-f\right\Vert _{L^{q(.)}\left( \Omega ,\vartheta
_{0}\right) }\left\Vert \phi \vartheta _{0}^{-\frac{1}{q(.)}}\right\Vert
_{L^{q^{\prime }(.)}\left( \Omega \right) } \\
&\leq &C\left\Vert f_{n}-f\right\Vert _{L^{q(.)}\left( \Omega ,\vartheta
_{0}\right) }\left\Vert \phi \right\Vert _{L^{\infty }(\Omega )}\left\Vert
\vartheta _{0}^{-\frac{1}{q(.)}}\right\Vert _{L^{q^{\prime }(.)}\left( \text{%
supp}\phi \right) }
\end{eqnarray*}%
where supp$\phi \subset \Omega $ denotes the support of $\phi $ and $\frac{1%
}{q(.)}+\frac{1}{q^{\prime }(.)}=1$. Since $\vartheta _{0}\in W_{q(.)}\left(
\Omega \right) ,$ we get $\left\Vert \vartheta _{0}^{-\frac{1}{q(x)}%
}\right\Vert _{L^{q^{\prime }(.)}\left( \text{supp}\phi \right) }<\infty .$
Moreover, if we consider the fact that $f_{n}\longrightarrow f$ in $%
L^{q(.)}(\Omega ,\vartheta _{0})$, we obtain 
\begin{equation*}
\dint\limits_{\Omega }f_{n}\phi dx\longrightarrow \dint\limits_{\Omega
}f\phi dx
\end{equation*}%
as $n\longrightarrow \infty $. In similar way, using $\frac{\partial f_{n}}{%
\partial x_{i}}\longrightarrow v_{i}$ in $L^{p(.)}(\Omega ,\vartheta )$ and $%
\vartheta \in W_{p(.)}\left( \Omega \right) $, we get 
\begin{equation*}
\dint\limits_{\Omega }\frac{\partial f_{n}}{\partial x_{i}}\phi
dx\longrightarrow \dint\limits_{\Omega }v_{i}\phi dx
\end{equation*}%
as $n\longrightarrow \infty $ for all $\phi \in C_{0}^{\infty }\left( \Omega
\right) $ and $i=1,2,..,d.$ This yields 
\begin{eqnarray*}
\dint\limits_{\Omega }v_{i}\phi dx &=&\lim_{n\longrightarrow \infty
}\dint\limits_{\Omega }\frac{\partial f_{n}}{\partial x_{i}}\phi dx \\
&=&-\lim_{n\longrightarrow \infty }\dint\limits_{\Omega }f_{n}\frac{\partial
\phi }{\partial x_{i}}dx \\
&=&-\dint\limits_{\Omega }f\frac{\partial \phi }{\partial x_{i}}dx
\end{eqnarray*}%
for all $\phi \in C_{0}^{\infty }\left( \Omega \right) $ and $i=1,2,..,d.$
It follows that $v_{i}=\frac{\partial f}{\partial x_{i}}$, hence $\left(
f_{n}\right) $ converges to $f$ in $W^{1,q(.),p(.)}\left( \Omega ,\vartheta
_{0},\vartheta \right) $.
\end{proof}

\begin{remark}
The dual space of $W^{1,q(.),p(.)}\left( \Omega ,\vartheta _{0},\vartheta
\right) $ is $W^{-1,q^{\prime }(.),p^{\prime }(.)}\left( \Omega ,\vartheta
_{0}^{\ast },\vartheta ^{\ast }\right) $ where $\frac{1}{p(.)}+\frac{1}{%
p^{\prime }(.)}=1$, $\frac{1}{q(.)}+\frac{1}{q^{\prime }(.)}=1$, $\vartheta
^{\ast }=\vartheta ^{1-p^{\prime }\left( .\right) }=\vartheta ^{-\frac{1}{%
p(.)-1}}$ and $\vartheta _{0}^{\ast }=\vartheta _{0}^{1-q^{\prime }\left(
.\right) }=\vartheta _{0}^{-\frac{1}{q(.)-1}}.$
\end{remark}

It is clear that $C_{0}^{\infty }\left( \Omega \right) $ is a subspace of $%
W^{1,q(.),p(.)}\left( \Omega ,\vartheta _{0},\vartheta \right) $. Then, we
define the space $W_{0}^{1,q(.),p(.)}\left( \Omega ,\vartheta _{0},\vartheta
\right) $ as the closure of $C_{0}^{\infty }\left( \Omega \right) $ in $%
W^{1,q(.),p(.)}\left( \Omega ,\vartheta _{0},\vartheta \right) .$ Since $%
W_{0}^{1,q(.),p(.)}\left( \Omega ,\vartheta _{0},\vartheta \right) $ is a
closed subset of $W^{1,q(.),p(.)}\left( \Omega ,\vartheta _{0},\vartheta
\right) ,$ then the space $W_{0}^{1,q(.),p(.)}\left( \Omega ,\vartheta
_{0},\vartheta \right) $ is a Banach space with respect to $\left\Vert
.\right\Vert _{W^{1,q(.),p(.)}\left( \Omega ,\vartheta _{0},\vartheta
\right) }$. In particular case, if $\vartheta _{0}=\vartheta $ and $%
q(.)=p(.) $, then we have $W^{1,p(.)}\left( \Omega ,\vartheta \right)
=W^{1,p(.),p(.)}\left( \Omega ,\vartheta ,\vartheta \right) $ and $%
W_{0}^{1,p(.)}\left( \Omega ,\vartheta \right) =W_{0}^{1,p(.),p(.)}\left(
\Omega ,\vartheta ,\vartheta \right) $.

\begin{proposition}
\label{pro12}(see \cite{Gold})Let $\left\vert 
%TCIMACRO{\U{211d} }%
%BeginExpansion
\mathbb{R}
%EndExpansion
^{d}-\Omega \right\vert >0$. If $f\in W_{0}^{1,q(.),p(.)}\left( \Omega
,\vartheta _{0},\vartheta \right) $, then the function 
\begin{equation}
\widetilde{f}(x)=\left\{ 
\begin{array}{cc}
f(x) & \text{if }x\in \Omega \\ 
0 & \text{if x}\in 
%TCIMACRO{\U{211d} }%
%BeginExpansion
\mathbb{R}
%EndExpansion
^{d}-\Omega%
\end{array}%
\right.  \label{cano}
\end{equation}%
belongs to $W^{1,q(.),p(.)}\left( 
%TCIMACRO{\U{211d} }%
%BeginExpansion
\mathbb{R}
%EndExpansion
^{d},\vartheta _{0},\vartheta \right) $, and for each $i=1,2,...,d,$ one has%
\begin{equation*}
\frac{\partial \widetilde{f}}{\partial x_{i}}(x)=\left\{ 
\begin{array}{cc}
\frac{\partial f}{\partial x_{i}}(x) & \text{if }x\in \Omega \\ 
0 & \text{if }x\in 
%TCIMACRO{\U{211d} }%
%BeginExpansion
\mathbb{R}
%EndExpansion
^{d}-\Omega .%
\end{array}%
\right.
\end{equation*}
\end{proposition}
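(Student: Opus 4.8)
The plan is to prove this by the standard ``extension by zero'' argument, reducing the weighted variable-exponent case to the defining density property of $W_{0}^{1,q(.),p(.)}\left( \Omega ,\vartheta _{0},\vartheta \right) $. First I would take $f\in W_{0}^{1,q(.),p(.)}\left( \Omega ,\vartheta _{0},\vartheta \right) $ and choose, by definition, a sequence $\left( \phi _{n}\right) \subset C_{0}^{\infty }\left( \Omega \right) $ with $\phi _{n}\longrightarrow f$ in $W^{1,q(.),p(.)}\left( \Omega ,\vartheta _{0},\vartheta \right) $. For each $\phi _{n}$ the trivial extension $\widetilde{\phi }_{n}$ is again in $C_{0}^{\infty }\left(
\mathbb{R}
^{d}\right) $ (its support is a compact subset of $\Omega $), and classically $\frac{\partial \widetilde{\phi }_{n}}{\partial x_{i}}=\widetilde{\left( \frac{\partial \phi _{n}}{\partial x_{i}}\right) }$ pointwise on $
\mathbb{R}
^{d}$. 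The key observation is that for any $g\in L^{q(.)}(\Omega ,\vartheta _{0})$ one has $\left\Vert \widetilde{g}\right\Vert _{L^{q(.)}(
\mathbb{R}
^{d},\vartheta _{0})}=\left\Vert g\right\Vert _{L^{q(.)}(\Omega ,\vartheta _{0})}$ (and similarly with $p(.),\vartheta $), since the modular integral over $
\mathbb{R}
^{d}-\Omega $ of the zero extension vanishes; here I must note that $\vartheta _{0},\vartheta $ are tacitly extended to $
\mathbb{R}
^{d}$ as positive weights, which is harmless because only their values on $\Omega $ enter the norms of the extended functions. Consequently the extension map is a linear isometry from $W^{1,q(.),p(.)}\left( \Omega ,\vartheta _{0},\vartheta \right) $ into $W^{1,q(.),p(.)}\left(
\mathbb{R}
^{d},\vartheta _{0},\vartheta \right) $ when restricted to functions whose weak derivatives are the zero-extensions of the weak derivatives on $\Omega $.

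Next I would use completeness: since $\left( \phi _{n}\right) $ is Cauchy in $W^{1,q(.),p(.)}\left( \Omega ,\vartheta _{0},\vartheta \right) $ and the extension is norm-preserving, $\left( \widetilde{\phi }_{n}\right) $ is Cauchy in $W^{1,q(.),p(.)}\left(
\mathbb{R}
^{d},\vartheta _{0},\vartheta \right) $, which is a Banach space by the Proposition proved above (applied with $\Omega $ replaced by $
\mathbb{R}
^{d}$). Hence $\widetilde{\phi }_{n}\longrightarrow g$ for some $g\in W^{1,q(.),p(.)}\left(
\mathbb{R}
^{d},\vartheta _{0},\vartheta \right) $. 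It remains to identify $g$ with $\widetilde{f}$ as defined in $(\ref{cano})$ and to identify $\frac{\partial g}{\partial x_{i}}$ with the zero-extension of $\frac{\partial f}{\partial x_{i}}$. For the first, note $\widetilde{\phi }_{n}\longrightarrow \widetilde{f}$ in $L^{q(.)}(
\mathbb{R}
^{d},\vartheta _{0})$ by the isometry, and $L^{q(.)}(
\mathbb{R}
^{d},\vartheta _{0})\hookrightarrow L_{loc}^{1}(
\mathbb{R}
^{d})$ since $\vartheta _{0}\in W_{q(.)}(
\mathbb{R}
^{d})$, so $g=\widetilde{f}$ a.e.; the derivative identification is analogous using $\vartheta \in W_{p(.)}$ and convergence in $L^{p(.)}(
\mathbb{R}
^{d},\vartheta )$, after passing to the distributional limit exactly as in the proof of the previous Proposition.

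The main obstacle, and the only genuinely non-routine point, is making rigorous that the trivial extension of a $W^{1,q(.),p(.)}$-function on $\Omega $ has its weak gradient equal to the trivial extension of the gradient, \emph{without} a priori knowing the function lies in $W_{0}$; this is false in general (it is precisely why the hypothesis $f\in W_{0}^{1,q(.),p(.)}$ is needed and why $\left\vert
\mathbb{R}
^{d}-\Omega \right\vert >0$ is assumed, so that no boundary term appears). I would handle it entirely at the level of the smooth approximants $\phi _{n}$, where the identity is classical, and then transfer it through the isometry and completeness; the only thing to check carefully is that testing against $\psi \in C_{0}^{\infty }(
\mathbb{R}
^{d})$ (whose support may cross $\partial \Omega $) still passes to the limit, which it does because the convergences $\widetilde{\phi }_{n}\to \widetilde{f}$ and $\frac{\partial \widetilde{\phi }_{n}}{\partial x_{i}}\to \widetilde{\left( \frac{\partial f}{\partial x_{i}}\right) }$ hold in $L_{loc}^{1}(
\mathbb{R}
^{d})$ globally, so $\dint_{
\mathbb{R}
^{d}}\widetilde{\phi }_{n}\frac{\partial \psi }{\partial x_{i}}dx\to \dint_{
\mathbb{R}
^{d}}\widetilde{f}\frac{\partial \psi }{\partial x_{i}}dx$ and likewise for the derivative term, yielding the claimed weak derivative formula on all of $
\mathbb{R}
^{d}$.
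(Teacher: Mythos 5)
The paper itself gives no proof of this proposition --- it is simply quoted from \cite{Gold} --- so the only meaningful comparison is with the standard argument that reference uses, which is precisely the route you take: approximate $f$ by $C_{0}^{\infty }\left( \Omega \right) $ functions, extend by zero (an operation that preserves the weighted norms of such functions), and pass to the distributional limit. In outline this is the right, and essentially the only, approach.

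There is, however, a genuine gap at exactly the step you single out as the crucial one. To pass to the limit in $\int_{\mathbb{R}^{d}}\widetilde{\phi }_{n}\,\partial _{i}\psi \,dx$ and $\int_{\mathbb{R}^{d}}\partial _{i}\widetilde{\phi }_{n}\,\psi \,dx$ for a test function $\psi \in C_{0}^{\infty }(\mathbb{R}^{d})$ whose support crosses $\partial \Omega $, you invoke convergence of $\widetilde{\phi }_{n}$ and $\partial _{i}\widetilde{\phi }_{n}$ in $L_{loc}^{1}(\mathbb{R}^{d})$, justified by the claim $\vartheta _{0}\in W_{q(.)}(\mathbb{R}^{d})$ (and likewise $\vartheta \in W_{p(.)}(\mathbb{R}^{d})$). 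The standing hypotheses only give $\vartheta _{0}\in W_{q(.)}\left( \Omega \right) $, i.e. $\vartheta _{0}^{-1/(q(.)-1)}\in L_{loc}^{1}\left( \Omega \right) $, which controls compact subsets \emph{of} $\Omega $ and says nothing about integrability of $\vartheta _{0}^{-1/(q(.)-1)}$ near $\partial \Omega $ from inside; extending the weights by a positive constant outside $\Omega $ does not repair this, since for a compact $K$ meeting $\partial \Omega $ one still needs $\int_{K\cap \Omega }\vartheta _{0}^{-1/(q(x)-1)}dx<\infty $. Concretely, the H\"{o}lder estimate behind your limit passage requires $\left\Vert \partial _{i}\psi \,\vartheta _{0}^{-1/q(.)}\right\Vert _{L^{q^{\prime }(.)}\left( \Omega \right) }<\infty $ with $\partial _{i}\psi $ not vanishing near $\partial \Omega $, i.e. a \emph{global} integrability condition on the dual weights over $\Omega $ (of the type of condition (II) imposed later in the paper, and of the hypotheses in \cite{Gold}); the same issue undercuts your appeal to completeness of $W^{1,q(.),p(.)}\left( \mathbb{R}^{d},\vartheta _{0},\vartheta \right) $, whose proof also needs the extended weights to lie in the corresponding classes. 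Without such a condition, $\widetilde{f}$ need not even belong to $L_{loc}^{1}(\mathbb{R}^{d})$, so the asserted formula for $\partial \widetilde{f}/\partial x_{i}$ does not make distributional sense. Either add this hypothesis explicitly (then your argument closes as written), or derive it from assumptions you state; as it stands, the assertion that the convergences hold in $L_{loc}^{1}(\mathbb{R}^{d})$ ``globally'' is not justified by what is assumed.
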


The functions in $W_{0}^{1,q(.),p(.)}\left( \Omega ,\vartheta _{0},\vartheta
\right) $ can be extended by zero outside $\Omega $ into a function in $%
W^{1,q(.),p(.)}\left( 
%TCIMACRO{\U{211d} }%
%BeginExpansion
\mathbb{R}
%EndExpansion
^{d},\vartheta _{0},\vartheta \right) .$ Also, it is easy to see that 
\begin{equation*}
W_{0}^{1,q(.),p(.)}\left( 
%TCIMACRO{\U{211d} }%
%BeginExpansion
\mathbb{R}
%EndExpansion
^{d},\vartheta _{0},\vartheta \right) =W^{1,q(.),p(.)}\left( 
%TCIMACRO{\U{211d} }%
%BeginExpansion
\mathbb{R}
%EndExpansion
^{d},\vartheta _{0},\vartheta \right) .
\end{equation*}%
Moreover, by Proposition \ref{pro12}, we can write that%
\begin{equation*}
W_{0}^{1,q(.),p(.)}\left( \Omega ,\vartheta _{0},\vartheta \right) =\left\{
f\in L^{q(.)}(\Omega ,\vartheta _{0}):\left\vert \nabla f\right\vert
^{p(.)}\in L^{1}(\Omega ,\vartheta ),\text{ }f=0\text{ on }\partial \Omega
\right\}
\end{equation*}%
equipped with the norm%
\begin{equation*}
\left\Vert f\right\Vert _{W_{0}^{1,q(.),p(.)}\left( \Omega ,\vartheta
_{0},\vartheta \right) }=\left\Vert f\right\Vert _{L^{q(.)}(\Omega
,\vartheta _{0})}+\left\Vert \nabla f\right\Vert _{L^{p(.)}(\Omega
,\vartheta )}.
\end{equation*}

The importance of $W_{0}^{1,q(.),p(.)}\left( \Omega ,\vartheta
_{0},\vartheta \right) $ can be seen in various applications, such as
Dirichlet problem for elliptic partial differential equations. That means
the zero extension property above allows us to consider the space as a
solution space for problems with Dirichlet type boundary conditions.

Now, we consider the Poincar\'{e} inequality in the space $%
W_{0}^{1,q(.),p(.)}\left( \Omega ,\vartheta _{0},\vartheta \right) .$ Let $%
A\subset 
%TCIMACRO{\U{211d} }%
%BeginExpansion
\mathbb{R}
%EndExpansion
^{d}.$ We define 
\begin{equation*}
p_{A}^{-}=\underset{x\in A\cap \Omega }{\text{essinf}}p(x)\text{, \ \ \ \ \
\ }p_{A}^{+}=\underset{x\in A\cap \Omega }{\text{esssup}}p(x).
\end{equation*}%
If $p_{\Omega }^{+}<\infty $ and if there exists $r>0$ such that every $x\in
\Omega $ either%
\begin{equation*}
p_{B\left( x,r\right) }^{-}\geq d
\end{equation*}%
or%
\begin{equation*}
p_{B\left( x,r\right) }^{+}\leq \frac{dp_{B\left( x,r\right) }^{-}}{%
d-p_{B\left( x,r\right) }^{-}}
\end{equation*}%
is valid, then the variable exponent $p\left( .\right) $ is said to
satisfies the jump condition in $\Omega $ with constant $r.$ Moreover we put%
\begin{equation*}
p_{B\left( x,r\right) }^{\ast }=\left\{ 
\begin{array}{c}
\frac{dp_{B\left( x,r\right) }^{-}}{d-p_{B\left( x,r\right) }^{-}},\text{ \
\ \ \ }p_{B\left( x,r\right) }^{-}<d \\ 
p_{B\left( x,r\right) }^{+},\text{ \ \ \ \ }p_{B\left( x,r\right) }^{-}\geq d%
\end{array}%
\right. 
\end{equation*}%
It is clear that if $\Omega $ is bounded and if $p\left( .\right) $ is
continuous in $\overline{\Omega }$, then $p\left( .\right) $ satisfies the
jump condition in $\Omega $ with some $r>0,$ see \cite{Har4}, \cite{Unal2}.

\begin{remark}
\label{Remark}Let $\Omega \subset 
%TCIMACRO{\U{211d} }%
%BeginExpansion
\mathbb{R}
%EndExpansion
^{d}$ be a bounded set. Then, the claim of Proposition 2.4 in \cite{Liu}
satisfies even if $p\left( .\right) =1.$ This yields that the space $%
L_{\vartheta }^{p\left( .\right) }\left( \Omega \right) $ is continuously
embedded in $L_{\vartheta }^{1}\left( \Omega \right) $.
\end{remark}

Now, we are ready to consider the Poincar\'{e} inequality for $%
W_{0}^{1,q(.),p(.)}\left( \Omega ,\vartheta _{0},\vartheta \right) .$

\begin{theorem}
\label{poincare}Let $\Omega $ be a bounded open domain in $%
%TCIMACRO{\U{211d} }%
%BeginExpansion
\mathbb{R}
%EndExpansion
^{d}$ with smooth boundary $\partial \Omega .$ Moreover, assume that the
exponent $q\left( .\right) $ holds the jump condition in $\Omega $ with
constant $r>0$ and $\vartheta _{0}\prec \vartheta $. Then, there is a
constant $C>0$ such that 
\begin{equation}
\left\Vert f\right\Vert _{L^{q(.)}(\Omega ,\vartheta _{0})}\leq C\left\Vert
\nabla f\right\Vert _{L^{p(.)}(\Omega ,\vartheta )}  \label{3}
\end{equation}%
for every $f\in W_{0}^{1,q(.),p(.)}\left( \Omega ,\vartheta _{0},\vartheta
\right) $.
\end{theorem}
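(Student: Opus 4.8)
The plan is to reduce (\ref{3}) to functions in $C_0^\infty(\Omega)$, prove it there by chaining a weighted variable exponent Poincar\'e inequality for $W_0^{1,q(.)}(\Omega,\vartheta_0)$ with a nesting of weighted variable exponent Lebesgue spaces on the bounded set $\Omega$, and then pass to the limit. Since $W_0^{1,q(.),p(.)}(\Omega,\vartheta_0,\vartheta)$ is by definition the closure of $C_0^\infty(\Omega)$ in the $W^{1,q(.),p(.)}(\Omega,\vartheta_0,\vartheta)$-norm, and since both $f\mapsto\left\Vert f\right\Vert_{L^{q(.)}(\Omega,\vartheta_0)}$ and $f\mapsto\left\Vert\nabla f\right\Vert_{L^{p(.)}(\Omega,\vartheta)}$ are $1$-Lipschitz with respect to that norm, once (\ref{3}) is established for all $f\in C_0^\infty(\Omega)$ it follows for an arbitrary $f\in W_0^{1,q(.),p(.)}(\Omega,\vartheta_0,\vartheta)$ by taking $f_n\in C_0^\infty(\Omega)$ with $f_n\to f$ in $W^{1,q(.),p(.)}(\Omega,\vartheta_0,\vartheta)$ and letting $n\to\infty$.

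So fix $f\in C_0^\infty(\Omega)$; then $f$ and $\nabla f$ are bounded with support in a compact subset of $\Omega$, so (since $\vartheta_0\in L_{loc}^1(\Omega)$) $f\in W_0^{1,q(.)}(\Omega,\vartheta_0)$. The first and main step is the weighted Poincar\'e inequality
\begin{equation*}
\left\Vert f\right\Vert_{L^{q(.)}(\Omega,\vartheta_0)}\leq C_1\left\Vert\nabla f\right\Vert_{L^{q(.)}(\Omega,\vartheta_0)}.
\end{equation*}
This is where the jump condition on $q(.)$ and the smoothness of $\partial\Omega$ are used: after extending $f$ by zero to $\mathbb{R}^d$ via Proposition \ref{pro12} and using the elementary pointwise bound $\left\vert f(x)\right\vert\leq c\int_\Omega\left\vert\nabla f(y)\right\vert\left\vert x-y\right\vert^{1-d}\,dy$, one covers $\Omega$ by balls of radius $r$ and, on each ball $B(x,r)$, applies either the local embedding into $L^\infty$ when $q_{B(x,r)}^{-}\geq d$, or the local Sobolev--Poincar\'e estimate with target exponent $q_{B(x,r)}^{+}$ afforded by the other alternative $q_{B(x,r)}^{+}\leq dq_{B(x,r)}^{-}/(d-q_{B(x,r)}^{-})$ of the jump condition (which applies when $q_{B(x,r)}^{-}<d$); patching the local estimates---and using the weighted $L^1$-Poincar\'e inequality recalled in Section~2 together with $\vartheta_0\in W_{q(.)}(\Omega)$---produces $C_1$. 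I expect this covering-and-patching step, and in particular carrying the weight $\vartheta_0$ through the local embeddings with a constant independent of $\mathrm{supp}\,f$, to be the real obstacle; the needed machinery is in \cite{Har4}, \cite{Liu}, \cite{Unal2}.

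The second step concerns only Lebesgue spaces. Because $\vartheta_0\prec\vartheta$, fix $c>0$ with $\vartheta_0\leq c\vartheta$; the modular--norm relations then give $\left\Vert g\right\Vert_{L^{q(.)}(\Omega,\vartheta_0)}\leq\max\{c^{1/q^{-}},c^{1/q^{+}}\}\left\Vert g\right\Vert_{L^{q(.)}(\Omega,\vartheta)}$ for every $g$. Moreover, since $\Omega$ is bounded and $q^{+}<p^{-}$ (hence $q(.)\leq p(.)$ a.e.), splitting $\varrho_{q(.),\vartheta}(g)$ over $\{\left\vert g\right\vert\leq1\}$ and $\{\left\vert g\right\vert>1\}$ and using $\left\vert g\right\vert^{q(x)}\leq\left\vert g\right\vert^{p(x)}$ on the latter yields $L^{p(.)}(\Omega,\vartheta)\hookrightarrow L^{q(.)}(\Omega,\vartheta)$, exactly as in Remark \ref{Remark} and \cite{Liu}. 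Applying both inclusions to $g=\nabla f$ gives $\left\Vert\nabla f\right\Vert_{L^{q(.)}(\Omega,\vartheta_0)}\leq C_2\left\Vert\nabla f\right\Vert_{L^{p(.)}(\Omega,\vartheta)}$.

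Combining the two steps, $\left\Vert f\right\Vert_{L^{q(.)}(\Omega,\vartheta_0)}\leq C_1C_2\left\Vert\nabla f\right\Vert_{L^{p(.)}(\Omega,\vartheta)}$ for every $f\in C_0^\infty(\Omega)$, and the density argument of the first paragraph then delivers (\ref{3}) with $C=C_1C_2$ for all $f\in W_0^{1,q(.),p(.)}(\Omega,\vartheta_0,\vartheta)$. The one point to watch throughout is that the constants in the Lebesgue-space inclusions must not depend on the supports of the approximating functions, which is why one uses the behaviour of $\vartheta_0$ and $\vartheta$ on the bounded set $\Omega$ afforded by $\vartheta_0\in W_{q(.)}(\Omega)$ and $\vartheta\in W_{p(.)}(\Omega)$.
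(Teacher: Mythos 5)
Your reduction to $C_0^\infty(\Omega)$ by density is legitimate, and your second step is exactly the embedding chain $L^{p(.)}\left( \Omega ,\vartheta \right) \hookrightarrow L^{p(.)}\left( \Omega ,\vartheta _{0}\right) \hookrightarrow L^{q(.)}\left( \Omega ,\vartheta _{0}\right) $ that the paper also uses. The genuine gap is your ``first and main step'': the inequality $\left\Vert f\right\Vert _{L^{q(.)}(\Omega ,\vartheta _{0})}\leq C_{1}\left\Vert \nabla f\right\Vert _{L^{q(.)}(\Omega ,\vartheta _{0})}$ is, up to your step two, the theorem itself (it is the statement with $p(.)=q(.)$ and $\vartheta =\vartheta _{0}$), and you do not prove it --- you sketch a route via the pointwise Riesz-potential bound $\left\vert f(x)\right\vert \leq c\int_{\Omega }\left\vert \nabla f(y)\right\vert \left\vert x-y\right\vert ^{1-d}dy$ and the local Sobolev embeddings, and then explicitly defer ``carrying the weight $\vartheta _{0}$ through'' to \cite{Har4}, \cite{Liu}, \cite{Unal2}. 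That deferral does not close the argument: the Riesz-potential bound and the local embeddings you invoke (into $L^{\infty }$ when $q_{B}^{-}\geq d$, Sobolev--Poincar\'{e} with target $q_{B}^{+}$ otherwise) are unweighted statements, and turning them into an $L^{q(.)}(\vartheta _{0})$ estimate requires weighted norm inequalities (Muckenhoupt/admissibility-type hypotheses on $\vartheta _{0}$) that neither the jump condition nor $\vartheta _{0}\in W_{q(.)}\left( \Omega \right) $ provides, and none of the three references states a same-weight variable exponent Poincar\'{e} inequality of the form you need. You yourself flag this as ``the real obstacle,'' so the heart of the theorem is left open.

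The paper's proof is structured precisely so as never to need that lemma. It works with a general $f\in W_{0}^{1,q(.),p(.)}\left( \Omega ,\vartheta _{0},\vartheta \right) $ through the zero extension of Proposition \ref{pro12}, covers $\Omega $ by finitely many balls $B\left( x_{n},r\right) $ supplied by the jump condition, and on each ball splits $\widetilde{f}$ into its deviation from the weighted average $f_{B\left( x_{n},r\right) }^{\ast }$ plus that average. The deviation is handled by the constant-exponent weighted Sobolev--Poincar\'{e} inequality on balls from \cite{Hei} together with \cite[Proposition 2.4]{Liu} (to move between $q_{B}^{\ast }$, $q_{B}^{-}$ and $q(.)$), and the resulting gradient norm is immediately upgraded to $\left\Vert \nabla f\right\Vert _{L^{p(.)}(\Omega ,\vartheta )}$ using $q(.)<p(.)$ and $\vartheta _{0}\prec \vartheta $; the averages are controlled by the $L_{\vartheta }^{1}$ Poincar\'{e} inequality recalled in Section 2 and Remark \ref{Remark}, and the norms $\left\Vert 1\right\Vert _{L^{q_{B}^{\ast }}\left( B,\vartheta _{0}\right) }$ enter only as finite constants. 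To repair your outline you should either reproduce this oscillation-plus-average decomposition (which makes your step one unnecessary), or else add and verify explicit hypotheses on $\vartheta _{0}$ (for instance $q$-admissibility in the sense of \cite{Hei}) under which your same-weight Poincar\'{e} inequality actually holds.
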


\begin{proof}
Since $\Omega $ is a bounded set, $\overline{\Omega }$ is compact. Then, we
can find $x_{1},x_{2},...,x_{n}$ such that%
\begin{equation*}
\Omega \subset \tbigcup\limits_{n=1}^{t}B\left( x_{n},r\right) .
\end{equation*}%
Because of the fact that $f\in W_{0}^{1,q(.),p(.)}\left( \Omega ,\vartheta
_{0},\vartheta \right) ,$ the function $\widetilde{f}$ can be taken as (\ref%
{cano}). Since the exponent $q\left( .\right) $ holds the jump condition, we
get by \cite[Proposition 2.4]{Liu} that%
\begin{eqnarray}
\left\Vert f\right\Vert _{L^{q(.)}\left( \Omega ,\vartheta _{0}\right) }
&=&\left\Vert \widetilde{f}\right\Vert _{L^{q(.)}\left( 
%TCIMACRO{\U{211d} }%
%BeginExpansion
\mathbb{R}
%EndExpansion
^{d},\vartheta _{0}\right) }\leq \left\Vert \widetilde{f}\left[ \chi
_{B\left( x_{1},r\right) }+...+\chi _{B\left( x_{n},r\right) }\right]
\right\Vert _{L^{q(.)}\left( 
%TCIMACRO{\U{211d} }%
%BeginExpansion
\mathbb{R}
%EndExpansion
^{d},\vartheta _{0}\right) }  \notag \\
&\leq &\tsum\limits_{n=1}^{t}\left\Vert \widetilde{f}\right\Vert
_{L^{q(.)}\left( B\left( x_{n},r\right) ,\vartheta _{0}\right) }\leq
c\tsum\limits_{n=1}^{t}\left\Vert \widetilde{f}\right\Vert _{L^{q_{B\left(
x_{n},r\right) }^{\ast }}\left( B\left( x_{n},r\right) ,\vartheta
_{0}\right) }  \notag \\
&\leq &c\tsum\limits_{n=1}^{t}\left( \left\Vert \widetilde{f}-f_{B\left(
x_{n},r\right) }^{\ast }\right\Vert _{L^{q_{B\left( x_{n},r\right) }^{\ast
}}\left( B\left( x_{n},r\right) ,\vartheta _{0}\right) }\right.   \notag \\
&&\left. +\left\vert f_{B\left( x_{n},r\right) }^{\ast }\right\vert
\left\Vert 1\right\Vert _{L^{q_{B\left( x_{n},r\right) }^{\ast }}\left(
B\left( x_{n},r\right) ,\vartheta _{0}\right) }\right) .  \label{4}
\end{eqnarray}%
It is note that the function $f_{B\left( x_{n},r\right) }^{\ast }$ is
average of $\widetilde{f}$ over the balls $B\left( x_{n},r\right) $ and
defined as $f_{B\left( x_{n},r\right) }^{\ast }=\frac{1}{\left\vert B\left(
x_{n},r\right) \right\vert }\tint\limits_{B\left( x_{n},r\right) }\widetilde{%
f}\left( x\right) \vartheta \left( x\right) dx,$ see \cite{Hei}. It is clear
that $q_{B\left( x_{n},r\right) }^{-}\leq q\left( .\right) .$ Moreover, if
we use the Poincar\'{e} inequality over the balls (see \cite[Section 1]{Hei}%
) and the embedding $L^{q\left( .\right) }\left( B\left( x_{n},r\right)
,\vartheta _{0}\right) \hookrightarrow L^{q_{B\left( x_{n},r\right)
}^{-}}\left( B\left( x_{n},r\right) ,\vartheta _{0}\right) $ (see \cite[%
Proposition 2.4]{Liu}), then we obtain%
\begin{eqnarray*}
\left\Vert \widetilde{f}-f_{B\left( x_{n},r\right) }^{\ast }\right\Vert
_{L^{q_{B\left( x_{n},r\right) }^{\ast }}\left( B\left( x_{n},r\right)
,\vartheta _{0}\right) } &\leq &rc\left\Vert \nabla \widetilde{f}\right\Vert
_{L^{q_{B\left( x_{n},r\right) }^{-}}\left( B\left( x_{n},r\right)
,\vartheta _{0}\right) } \\
&\leq &rc\left\Vert \nabla \widetilde{f}\right\Vert _{L^{q\left( .\right)
}\left( B\left( x_{n},r\right) ,\vartheta _{0}\right) } \\
&\leq &rc\left\Vert \nabla f\right\Vert _{L^{q\left( .\right) }\left( \Omega
,\vartheta _{0}\right) }
\end{eqnarray*}%
for all $n=1,2,...,t.$ Since $q\left( .\right) <p\left( .\right) $ and $%
\vartheta _{0}\prec \vartheta ,$ we have $L^{p\left( .\right) }\left( \Omega
,\vartheta \right) \hookrightarrow L^{p\left( .\right) }\left( \Omega
,\vartheta _{0}\right) \hookrightarrow L^{q\left( .\right) }\left( \Omega
,\vartheta _{0}\right) .$ This follows that%
\begin{equation*}
\left\Vert \widetilde{f}-f_{B\left( x_{n},r\right) }^{\ast }\right\Vert
_{L^{q_{B\left( x_{n},r\right) }^{\ast }}\left( B\left( x_{n},r\right)
,\vartheta _{0}\right) }\leq rc^{\ast \ast }\left\Vert \nabla f\right\Vert
_{L^{p\left( .\right) }\left( \Omega ,\vartheta \right) }.
\end{equation*}

Moreover, if we use the Poincar\'{e} inequality in $L_{\vartheta }^{1}\left(
\Omega \right) $ and Remark \ref{Remark}, then we get%
\begin{eqnarray*}
\left\vert f_{B\left( x_{n},r\right) }^{\ast }\right\vert &\leq &\frac{C}{%
r^{d}}\tint\limits_{\Omega }\left\vert f\left( x\right) \right\vert
\vartheta \left( x\right) dx\leq \frac{C}{r^{d}}diam\left( \Omega \right)
\tint\limits_{\Omega }\left\vert \nabla f\left( x\right) \right\vert
\vartheta \left( x\right) dx \\
&\leq &\frac{C}{r^{d}}diam\left( \Omega \right) c^{\ast \ast \ast
}\left\Vert \nabla f\right\Vert _{L^{q\left( .\right) }\left( \Omega
,\vartheta _{0}\right) } \\
&\leq &\frac{C}{r^{d}}diam\left( \Omega \right) c^{\ast \ast \ast
}\left\Vert \nabla f\right\Vert _{L^{p\left( .\right) }\left( \Omega
,\vartheta \right) }
\end{eqnarray*}%
for all $n=1,2,...,t.$ Since $\vartheta \in W_{p(.)}\left( \Omega \right) ,$
we have%
\begin{equation*}
\rho _{q_{B\left( x_{n},r\right) }^{\ast },\vartheta _{0}}\left( \chi
_{B\left( x_{n},r\right) }\right) =\tint\limits_{B\left( x_{n},r\right)
}\vartheta \left( x\right) dx<\infty .
\end{equation*}%
This yields that $\left\Vert 1\right\Vert _{L^{q_{B\left( x_{n},r\right)
}^{\ast }}\left( B\left( x_{n},r\right) ,\vartheta _{0}\right) }$ depends
only on $q_{B\left( x_{n},r\right) }^{\ast }$. Hence the claim follows from
the inequality (\ref{4}).
\end{proof}

The inequality (\ref{3}) is well known for the classical weighted Sobolev
spaces $W_{0}^{1,p}\left( \Omega ,\vartheta \right) $ under some conditions,
see \cite{Maz}. From now on, we assume that necessary conditions satisfy the
inequality (\ref{3}) in $W_{0}^{1,q(.),p(.)}\left( \Omega ,\vartheta
_{0},\vartheta \right) $.

\begin{definition}
\label{yeninorm}By the Theorem \ref{poincare}, we can present the norm on $%
W_{0}^{1,q(.),p(.)}\left( \Omega ,\vartheta _{0},\vartheta \right) $ denoted
by 
\begin{equation*}
\left\Vert \left\vert f\right\vert \right\Vert _{W_{0}^{1,q(.),p(.)}\left(
\Omega ,\vartheta _{0},\vartheta \right) }=\left\Vert \nabla f\right\Vert
_{L^{p(.)}(\Omega ,\vartheta )}
\end{equation*}%
for every $f\in W_{0}^{1,q(.),p(.)}\left( \Omega ,\vartheta _{0},\vartheta
\right) .$ It is note that the norms $\left\Vert .\right\Vert
_{W_{0}^{1,q(.),p(.)}\left( \Omega ,\vartheta _{0},\vartheta \right) }$ and $%
\left\Vert \left\vert .\right\vert \right\Vert _{W_{0}^{1,q(.),p(.)}\left(
\Omega ,\vartheta _{0},\vartheta \right) }$ are equivalent norms on $%
W_{0}^{1,q(.),p(.)}\left( \Omega ,\vartheta _{0},\vartheta \right) .$ Then,
the space $W_{0}^{1,q(.),p(.)}\left( \Omega ,\vartheta _{0},\vartheta
\right) $ is continuously embedded in $L^{q(.)}(\Omega ,\vartheta _{0})$ if
and only if the inequality (\ref{3}) is satisfied for all $f\in
W_{0}^{1,q(.),p(.)}\left( \Omega ,\vartheta _{0},\vartheta \right) .$
\end{definition}

\section{Application}

In this section, we discuss the $p\left( .\right) $-Laplace operator $%
-\Delta _{p(.),\vartheta }=-\func{div}\left( \vartheta (x)\left\vert \nabla
f\right\vert ^{p(x)-2}\nabla f\right) .$ Let us consider the functional%
\begin{equation*}
J\left( f\right) =\dint\limits_{\Omega }\left( \frac{1}{p\left( x\right) }%
\left\vert \nabla f\right\vert ^{p\left( x\right) }\vartheta \left( x\right)
-\frac{1}{q\left( x\right) }\left\vert f\right\vert ^{q\left( x\right)
}\vartheta _{0}\left( x\right) \right) dx
\end{equation*}%
for all $f\in W_{0}^{1,q(.),p(.)}\left( \Omega ,\vartheta _{0},\vartheta
\right) .$ Then $J\in C^{1}\left( W_{0}^{1,q(.),p(.)}\left( \Omega
,\vartheta _{0},\vartheta \right) ,%
%TCIMACRO{\U{211d} }%
%BeginExpansion
\mathbb{R}
%EndExpansion
\right) $, and the $p\left( .\right) $-Laplace operator is the derivative
operator of $J$ in the weak sense satisfies%
\begin{equation*}
\left\langle J^{\prime }\left( f\right) ,g\right\rangle
=\dint\limits_{\Omega }\left( \vartheta \left( x\right) \left\vert \nabla
f\right\vert ^{p\left( x\right) -2}\nabla f\nabla g-\vartheta _{0}\left(
x\right) \left\vert f\right\vert ^{q\left( x\right) -2}fg\right) dx
\end{equation*}%
for all $f,g\in W_{0}^{1,q(.),p(.)}\left( \Omega ,\vartheta _{0},\vartheta
\right) .$

\begin{definition}
We call that $f\in W_{0}^{1,q(.),p(.)}\left( \Omega ,\vartheta
_{0},\vartheta \right) $ is a weak solution of problem (\ref{P1}) if%
\begin{equation*}
\tint\limits_{\Omega }\vartheta (x)\left\vert \nabla f\right\vert
^{p(x)-2}\nabla f.\nabla gdx=\tint\limits_{\Omega }\vartheta
_{0}(x)\left\vert f\right\vert ^{q(x)-2}fgdx
\end{equation*}%
for all $g\in W_{0}^{1,q(.),p(.)}\left( \Omega ,\vartheta _{0},\vartheta
\right) .$
\end{definition}

Let $p\left( .\right) \in P\left( \Omega \right) .$ Now, we define Sobolev
conjugate of $p\left( .\right) $ as%
\begin{equation*}
p^{\ast }\left( .\right) =\left\{ 
\begin{array}{c}
\frac{dp\left( .\right) }{d-p\left( .\right) },\text{ \ \ \ \ }p\left(
.\right) <d \\ 
\infty ,\text{ \ \ \ \ \ }p\left( .\right) \geq d%
\end{array}%
\right. .
\end{equation*}

\begin{theorem}
\label{compactem2}(see \cite{Dien1}, \cite{Kim})Suppose that $\Omega \subset 
%TCIMACRO{\U{211d} }%
%BeginExpansion
\mathbb{R}
%EndExpansion
^{d}$ is an open, bounded set with Lipschitz boundary and $p\left( .\right)
\in C^{+}\left( \overline{\Omega }\right) ,$ $p\left( .\right) \in P^{\log
}\left( \Omega \right) $ with $1<p^{-}\leq p^{+}<d.$ If $r\left( .\right)
\in L^{\infty }\left( \Omega \right) $ with $r^{-}>1$ satisfies $r\left(
x\right) \leq p^{\ast }\left( x\right) $ for every $x\in \Omega ,$ then we
obtain the embedding $W^{1,p\left( .\right) }\left( \Omega \right)
\hookrightarrow L^{r\left( .\right) }\left( \Omega \right) .$ Moreover, the
compact embedding $W^{1,p\left( .\right) }\left( \Omega \right)
\hookrightarrow \hookrightarrow L^{r\left( .\right) }\left( \Omega \right) $
holds if $\underset{x\in \Omega }{\text{inf}}\left( p^{\ast }\left( x\right)
-r\left( x\right) \right) >0.$
\end{theorem}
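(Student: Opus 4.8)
Theorem \ref{compactem2} is stated as a citation of \cite{Dien1}, \cite{Kim}; the plan below is for proving it directly from the standard theory, which is presumably what those references establish.

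The plan is to reduce everything to the classical (constant-exponent) Rellich--Kondrachov theorem by a localization argument. First I would recall that since $\Omega$ is bounded with Lipschitz boundary and $p(.)\in P^{\log}(\Omega)\cap C^{+}(\overline\Omega)$ with $p^{+}<d$, the space $C^{\infty}(\overline\Omega)$ is dense in $W^{1,p(.)}(\Omega)$ and the standard Sobolev--Poincar\'{e} inequality for variable exponents holds locally: for each $x_{0}\in\overline\Omega$ one can choose a small ball (or a Lipschitz coordinate patch) on which $p^{+}_{B}$ is close to $p^{-}_{B}$, so that $p^{-}_{B}<d$ and the constant-exponent embedding $W^{1,p^{-}_{B}}(B)\hookrightarrow L^{(p^{-}_{B})^{*}}(B)$ applies. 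On such a patch, $W^{1,p(.)}(B)\hookrightarrow W^{1,p^{-}_{B}}(B)$ (since $|\Omega|<\infty$ and $p^{-}_{B}\le p(.)$ pointwise there), while on the target side $(p^{-}_{B})^{*}$ can be made $\ge r(x)$ for $x$ in that patch when the patch is small, because $p^{*}(.)$ is continuous and $\inf_{\Omega}(p^{*}-r)>0$. Interpolating and using boundedness of the domain then gives a local embedding $W^{1,p(.)}(B)\hookrightarrow L^{r(.)}(B)$, and covering $\overline\Omega$ by finitely many such patches and summing yields the continuous embedding $W^{1,p(.)}(\Omega)\hookrightarrow L^{r(.)}(\Omega)$; the log-H\"{o}lder hypothesis is what guarantees the local Sobolev inequality has a uniform constant across the finite cover.

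For the compactness assertion, I would exploit the strict gap $\delta:=\inf_{\Omega}(p^{*}-r)>0$. Pick an intermediate exponent $s(.)$ with $r(x)<s(x)<p^{*}(x)$ on $\Omega$ and $s^{-}>1$; by the first part, $W^{1,p(.)}(\Omega)\hookrightarrow L^{s(.)}(\Omega)$ continuously, so any bounded sequence $(f_{k})$ in $W^{1,p(.)}(\Omega)$ is bounded in $L^{s(.)}(\Omega)$. On each coordinate patch the classical Rellich--Kondrachov theorem gives compactness of $W^{1,p^{-}_{B}}(B)\hookrightarrow\hookrightarrow L^{t}(B)$ for any $t<(p^{-}_{B})^{*}$, in particular for a constant $t$ exceeding $r^{+}_{B}$; a diagonal argument over the finite cover extracts a subsequence converging in $L^{r^{-}}_{\mathrm{loc}}$, hence (the domain being bounded and $r(.)\in L^{\infty}$) in measure on $\Omega$. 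Then I would combine convergence in measure with the uniform bound in the strictly larger space $L^{s(.)}(\Omega)$: a Vitali-type equi-integrability argument in the modular $\varrho_{r(.)}$ shows $\int_{\Omega}|f_{k}-f_{j}|^{r(x)}\,dx\to 0$, which by the modular--norm relations forces $\|f_{k}-f_{j}\|_{L^{r(.)}(\Omega)}\to 0$. Thus the limit lies in $L^{r(.)}(\Omega)$ and the convergence is in norm, proving compactness.

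The main obstacle I expect is the passage from ``convergence in measure plus uniform boundedness in $L^{s(.)}$'' to ``norm convergence in $L^{r(.)}$'': one must verify the uniform integrability of $\{|f_{k}|^{r(.)}\}$ carefully, using H\"{o}lder's inequality in variable exponents to dominate $\int_{E}|f_{k}|^{r(x)}\,dx$ by $\||f_{k}|^{r(.)}\|_{L^{s(.)/r(.)}(\Omega)}\,\|\chi_{E}\|_{L^{(s(./r(.))'}(\Omega)}$ and then controlling the second factor by the measure of $E$ via the modular estimates; the variable character of the exponent ratio $s(.)/r(.)$ is what makes this step delicate, and the hypothesis $\inf(p^{*}-r)>0$ is precisely what leaves room to choose $s(.)$ with $\operatorname{ess\,inf}(s/r-1)>0$ so the argument closes uniformly. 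The localization and the invocation of the classical Rellich--Kondrachov theorem on each patch are, by contrast, routine once the log-H\"{o}lder continuity is in hand.
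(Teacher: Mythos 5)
The paper offers no argument for Theorem \ref{compactem2}: it is imported verbatim from \cite{Dien1} and \cite{Kim}, so there is no in-paper proof to compare your plan against; I can only judge the plan on its own terms. Your second half (compactness) is a correct and standard route: local constant-exponent Rellich--Kondrachov on a finite cover of patches gives a subsequence converging in measure, and the uniform bound in an intermediate space $L^{s(.)}(\Omega)$ with $\operatorname{ess\,inf}(s/r)>1$ feeds a Vitali/uniform-integrability argument in the modular $\varrho_{r(.)}$, which upgrades to norm convergence; the hypothesis $\inf_{\Omega}(p^{\ast}-r)>0$ is indeed exactly what lets you choose such an $s(.)$.

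The gap is in the first half. You justify the local inclusion into $L^{r(.)}$ by invoking $\inf_{\Omega}(p^{\ast}-r)>0$, but that hypothesis belongs only to the compactness assertion; the continuous embedding is claimed for every $r(.)$ with $r(x)\leq p^{\ast}(x)$, including the critical case $r=p^{\ast}$. The frozen-exponent argument gives, on a patch $B$, only $W^{1,p(.)}(B)\hookrightarrow W^{1,p_{B}^{-}}(B)\hookrightarrow L^{(p_{B}^{-})^{\ast}}(B)$, and to conclude $L^{(p_{B}^{-})^{\ast}}(B)\hookrightarrow L^{r(.)}(B)$ you need $(p_{B}^{-})^{\ast}\geq r_{B}^{+}$. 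In the critical case this reads $(p_{B}^{-})^{\ast}\geq (p_{B}^{+})^{\ast}$, which forces $p_{B}^{-}=p_{B}^{+}$; shrinking the ball narrows but never closes this gap for any non-constant $p(.)$, since $(p_{B}^{-})^{\ast}<p^{\ast}(x)$ wherever $p(x)>p_{B}^{-}$. So your sketch proves the embedding only under the strict uniform gap (the subcritical statement, which is incidentally all the present paper ever uses), not the first assertion as stated. Reaching $r=p^{\ast}$ requires a genuinely variable-exponent Sobolev inequality --- e.g.\ Diening's boundedness of the Riesz potential on $L^{p(.)}$ under log-H\"{o}lder continuity, which is precisely the content of \cite{Dien1}; the log-H\"{o}lder hypothesis is doing much more there than ensuring uniform constants across your finite cover.
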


\begin{theorem}
\label{comson}Suppose that $p\left( .\right) ,q\left( .\right) \in C\left( 
\overline{\Omega }\right) $ and $1<p\left( x\right) ,q\left( x\right) $ for
all $x\in \overline{\Omega }$ and moreover,
\end{theorem}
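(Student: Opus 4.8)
The statement (which I read as asserting the continuous embedding $W_{0}^{1,q(.),p(.)}\left( \Omega ,\vartheta _{0},\vartheta \right) \hookrightarrow L^{q(.)}(\Omega ,\vartheta _{0})$ and, under a subcriticality condition such as $\inf_{x\in \Omega }\bigl(p^{\ast }(x)-q(x)\bigr)>0$, the compact embedding $W_{0}^{1,q(.),p(.)}\left( \Omega ,\vartheta _{0},\vartheta \right) \hookrightarrow \hookrightarrow L^{q(.)}(\Omega ,\vartheta _{0})$) should be proved by factoring the identity operator through the classical (unweighted) variable exponent Sobolev space and invoking Theorem \ref{compactem2}. Concretely, the plan is to produce the chain
\[
W_{0}^{1,q(.),p(.)}\left( \Omega ,\vartheta _{0},\vartheta \right) \hookrightarrow W_{0}^{1,p(.)}\left( \Omega \right) \hookrightarrow \hookrightarrow L^{q(.)}\left( \Omega \right) \hookrightarrow L^{q(.)}(\Omega ,\vartheta _{0}),
\]
whose composition (continuous $\circ $ compact $\circ $ continuous) is compact; without the gap hypothesis the middle arrow degrades to a merely continuous embedding and one still gets the continuous version.

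First I would establish the left-hand arrow. Given $f\in W_{0}^{1,q(.),p(.)}\left( \Omega ,\vartheta _{0},\vartheta \right) $, pick $f_{n}\in C_{0}^{\infty }\left( \Omega \right) $ with $f_{n}\rightarrow f$ in the weighted norm. The weight hypotheses of the theorem (a positive lower bound on $\vartheta $, or — presumably where the auxiliary exponent $\lambda $ enters — an integrability condition on $\vartheta ^{-1}$) let one pass from $\left\Vert \cdot \right\Vert _{L^{p(.)}(\Omega ,\vartheta )}$ to an unweighted norm, via the inequality $C\left\Vert g\right\Vert _{L^{p(.)}(\Omega )}\leq \left\Vert g\right\Vert _{L^{p(.)}(\Omega ,\vartheta )}$ recorded just after the definition of the weighted Lebesgue space; hence $\left( \nabla f_{n}\right) $ is Cauchy in $L^{p(.)}\left( \Omega \right) $. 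The classical Poincar\'{e} inequality for $W_{0}^{1,p(.)}\left( \Omega \right) $ applied to $f_{n}-f_{m}\in C_{0}^{\infty }\left( \Omega \right) $ then makes $\left( f_{n}\right) $ Cauchy in $W^{1,p(.)}\left( \Omega \right) $, so it converges to some $F\in W_{0}^{1,p(.)}\left( \Omega \right) $. Since $f_{n}\rightarrow F$ in $L^{p(.)}\left( \Omega \right) $ and $f_{n}\rightarrow f$ in $L^{q(.)}(\Omega ,\vartheta _{0})$ both force convergence in $L_{loc}^{1}\left( \Omega \right) $ (using $\vartheta \in W_{p(.)}\left( \Omega \right) $, $\vartheta _{0}\in W_{q(.)}\left( \Omega \right) $ and the embedding $L^{p(.)}(\Omega ,\vartheta )\hookrightarrow L_{loc}^{1}\left( \Omega \right) $), we get $F=f$ almost everywhere, so $f\in W_{0}^{1,p(.)}\left( \Omega \right) $; passing to the limit and using the norm equivalence of Definition \ref{yeninorm} (Theorem \ref{poincare}) gives $\left\Vert f\right\Vert _{W^{1,p(.)}\left( \Omega \right) }\leq C\left\Vert \nabla f\right\Vert _{L^{p(.)}(\Omega ,\vartheta )}=C\left\Vert \left\vert f\right\vert \right\Vert _{W_{0}^{1,q(.),p(.)}\left( \Omega ,\vartheta _{0},\vartheta \right) }$. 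For the middle arrow, $p(.)\in C\left( \overline{\Omega }\right) $ is log-H\"{o}lder continuous, so under the hypotheses Theorem \ref{compactem2} yields $W^{1,p(.)}\left( \Omega \right) \hookrightarrow \hookrightarrow L^{q(.)}\left( \Omega \right) $ (compactly with the gap hypothesis, continuously without it), and restricting to the closed subspace $W_{0}^{1,p(.)}\left( \Omega \right) $ preserves this. For the right-hand arrow, an upper bound on $\vartheta _{0}$ gives $\int_{\Omega }\left\vert g\right\vert ^{q(x)}\vartheta _{0}(x)\,dx\leq C\int_{\Omega }\left\vert g\right\vert ^{q(x)}\,dx$, hence $L^{q(.)}\left( \Omega \right) \hookrightarrow L^{q(.)}(\Omega ,\vartheta _{0})$. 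Composing — equivalently, a bounded sequence in $W_{0}^{1,q(.),p(.)}\left( \Omega ,\vartheta _{0},\vartheta \right) $ is bounded in $W_{0}^{1,p(.)}\left( \Omega \right) $, admits a subsequence convergent in $L^{q(.)}\left( \Omega \right) $, and that subsequence then converges in $L^{q(.)}(\Omega ,\vartheta _{0})$ — finishes the argument.

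The main obstacle is the first arrow: since $W_{0}^{1,q(.),p(.)}\left( \Omega ,\vartheta _{0},\vartheta \right) $ is defined abstractly as a closure of $C_{0}^{\infty }\left( \Omega \right) $ in the weighted norm, one must genuinely verify that its elements are honest members of the unweighted $W_{0}^{1,p(.)}\left( \Omega \right) $; this is exactly where the weight hypotheses and the local-integrability facts (the embedding $L^{p(.)}(\Omega ,\vartheta )\hookrightarrow L_{loc}^{1}\left( \Omega \right) $ and Remark \ref{embed}) are indispensable, and where a careless argument invoking the $W_{0}^{1,p(.)}$-Poincar\'{e} inequality for $f$ before knowing $f\in W_{0}^{1,p(.)}\left( \Omega \right) $ would be circular. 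Everything downstream is a routine composition of embeddings and the elementary weight estimate $\int_{\Omega }\left\vert g\right\vert ^{q(x)}\vartheta _{0}(x)\,dx\leq C\int_{\Omega }\left\vert g\right\vert ^{q(x)}\,dx$.
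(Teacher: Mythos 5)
There is a genuine gap, and it starts with the reading of the statement: the theorem (whose hypotheses are (I)--(III) listed right after the theorem environment) concludes a compact embedding $W_{0}^{1,q(.),p(.)}\left( \Omega ,\vartheta _{0},\vartheta \right) \hookrightarrow \hookrightarrow L^{r(.)}\left( \Omega ,\vartheta _{1}\right) $ into a space weighted by a \emph{third} weight $\vartheta _{1}\in L^{\alpha (.)}\left( \Omega \right) $, for $1<r(.)<t^{\ast }(.)/\beta (.)$, not an embedding into $L^{q(.)}\left( \Omega ,\vartheta _{0}\right) $ under a gap condition on $p^{\ast }-q$. More importantly, your first arrow $W_{0}^{1,q(.),p(.)}\left( \Omega ,\vartheta _{0},\vartheta \right) \hookrightarrow W_{0}^{1,p(.)}\left( \Omega \right) $ relies on a positive lower bound for $\vartheta $ (to pass from $\left\Vert \cdot \right\Vert _{L^{p(.)}(\Omega ,\vartheta )}$ to the unweighted norm with the \emph{same} exponent $p(.)$), and your last arrow relies on an upper bound for the target weight. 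Neither is among the hypotheses: (II) only assumes $\vartheta ^{-t(.)/(p(.)-t(.))}\in L^{1}\left( \Omega \right) $ and $\vartheta _{0}^{-t(.)/(q(.)-t(.))}\in L^{1}\left( \Omega \right) $, and (III) gives a lower (not upper) bound on $\vartheta _{0}$, while $\vartheta _{1}$ is merely in $L^{\alpha (.)}$. The whole point of the theorem is to handle degenerate weights, where your reduction fails: with $\vartheta $ allowed to vanish, membership of $\nabla f$ in $L^{p(.)}\left( \Omega ,\vartheta \right) $ does not give $\nabla f\in L^{p(.)}\left( \Omega \right) $, so the exponent cannot be preserved.

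The paper's proof pays for the degeneracy by dropping to the auxiliary exponent $t(.)$ with $1<t(.)<q(.)<p(.)$: H\"{o}lder with the exponent pair $\bigl(p(.)/t(.),\,p(.)/(p(.)-t(.))\bigr)$ and (II) give $\nabla f\in L^{t(.)}\left( \Omega \right) $, while $t(.)<q(.)$, $\left\vert \Omega \right\vert <\infty $ and (III) give $f\in L^{t(.)}\left( \Omega \right) $; a closed-graph (``Banach theorem'') argument upgrades the resulting inclusion to a continuous embedding $W_{0}^{1,q(.),p(.)}\left( \Omega ,\vartheta _{0},\vartheta \right) \hookrightarrow W^{1,t(.)}\left( \Omega \right) $ (no zero-trace issue, and no approximation by $C_{0}^{\infty }$ functions is needed, since elements of the space already have weak gradients in $L_{loc}^{1}$). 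Then Theorem \ref{compactem2} gives $W^{1,t(.)}\left( \Omega \right) \hookrightarrow \hookrightarrow L^{s(.)}\left( \Omega \right) $ with $s(.)=r(.)\beta (.)<t^{\ast }(.)$, and a final H\"{o}lder step with $\vartheta _{1}\in L^{\alpha (.)}\left( \Omega \right) $ converts strong convergence in $L^{s(.)}\left( \Omega \right) $ of a weakly null sequence into strong convergence in $L^{r(.)}\left( \Omega ,\vartheta _{1}\right) $; this is exactly where the restriction $r(.)<t^{\ast }(.)/\beta (.)$ comes from. Your outline would be correct only under two-sided bounds on the weights, which trivialize the weighted setting and are strictly stronger than (I)--(III); to repair it you would need precisely the $t(.)$-reduction and the $(\alpha ,\beta )$-H\"{o}lder step that constitute the paper's argument.
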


\begin{enumerate}
\item[\textit{(I)}] $0<\vartheta _{1}\in L^{\alpha \left( .\right) }\left(
\Omega \right) $\textit{\ with }$1<\alpha \left( .\right) \in C\left( 
\overline{\Omega }\right) $,

\item[\textit{(II)}] $\vartheta _{0}^{-\frac{t\left( .\right) }{q\left(
.\right) -t\left( .\right) }},\vartheta ^{-\frac{t\left( .\right) }{p\left(
.\right) -t\left( .\right) }}\in L^{1}\left( \Omega \right) $\textit{\ where 
}$t\left( .\right) \in C\left( \overline{\Omega }\right) $\textit{\ and }$%
1<t\left( .\right) <q\left( .\right) <p\left( .\right) .$

\item[\textit{(III)}] $\vartheta _{0}\left( x\right) \geq c>0$\textit{\ for
all }$x\in \Omega .$
\end{enumerate}

\textit{Then we get the compact embedding }$W_{0}^{1,q(.),p(.)}\left( \Omega
,\vartheta _{0},\vartheta \right) \hookrightarrow \hookrightarrow L^{r\left(
.\right) }\left( \Omega ,\vartheta _{1}\right) $\textit{\ for every }$%
r\left( .\right) \in C\left( \overline{\Omega }\right) $\textit{\ and }$%
1<r\left( .\right) <\frac{t^{\ast }\left( .\right) }{\beta \left( .\right) }$%
\textit{\ where }$\frac{1}{\alpha \left( .\right) }+\frac{1}{\beta \left(
.\right) }=1$\textit{.}

\begin{proof}
Let $f\in W_{0}^{1,q(.),p(.)}\left( \Omega ,\vartheta _{0},\vartheta \right)
.$ Then we write that $f\in L^{q\left( .\right) }\left( \Omega ,\vartheta
_{0}\right) $ and $\nabla f\in L^{p\left( .\right) }\left( \Omega ,\vartheta
\right) $. This follows \textit{(II) }that $\rho _{L^{\frac{p\left( .\right) 
}{t\left( .\right) }}\left( \Omega \right) }\left( \left\vert \nabla
f\right\vert ^{t\left( .\right) }\vartheta ^{\frac{t\left( .\right) }{%
p\left( .\right) }}\right) <\infty $ and $\rho _{L^{\frac{p\left( .\right) }{%
p\left( .\right) -t\left( .\right) }}\left( \Omega \right) }\left( \vartheta
^{-\frac{t\left( .\right) }{p\left( .\right) }}\right) <\infty .$ By the H%
\"{o}lder inequality, we have%
\begin{equation*}
\tint\limits_{\Omega }\left\vert \nabla f\left( x\right) \right\vert
^{t\left( x\right) }dx\leq c_{h}\left\Vert \left\vert \nabla f\right\vert
^{t\left( .\right) }\vartheta ^{\frac{t\left( .\right) }{p\left( .\right) }%
}\right\Vert _{L^{\frac{p\left( .\right) }{t\left( .\right) }}\left( \Omega
\right) }\left\Vert \vartheta ^{-\frac{t\left( .\right) }{p\left( .\right) }%
}\right\Vert _{L^{\frac{p\left( .\right) }{p\left( .\right) -t\left(
.\right) }}\left( \Omega \right) }.
\end{equation*}

If we consider the \cite[Proposition 2.4]{Liu} and \textit{(II)}, then we
get $\left\Vert \vartheta ^{-\frac{t\left( .\right) }{p\left( .\right) }%
}\right\Vert _{L^{\frac{p\left( .\right) }{p\left( .\right) -t\left(
.\right) }}\left( \Omega \right) }^{\frac{p^{-}}{p^{+}-t^{-}}}<\infty $ and $%
\left\Vert \vartheta ^{-\frac{t\left( .\right) }{p\left( .\right) }%
}\right\Vert _{L^{\frac{p\left( .\right) }{p\left( .\right) -t\left(
.\right) }}\left( \Omega \right) }^{\frac{p^{+}}{p^{-}-t^{+}}}<\infty .$
This follows that%
\begin{equation*}
\left\Vert \vartheta ^{-\frac{t\left( .\right) }{p\left( .\right) }%
}\right\Vert _{L^{\frac{p\left( .\right) }{p\left( .\right) -t\left(
.\right) }}\left( \Omega \right) }\leq \left( \tint\limits_{\Omega }\left(
\vartheta \left( x\right) \right) ^{-\frac{t\left( x\right) }{p\left(
x\right) -t\left( x\right) }}dx+1\right) ^{\frac{p^{+}-t^{-}}{p^{-}}}\leq
c_{1}
\end{equation*}%
and 
\begin{equation}
\tint\limits_{\Omega }\left\vert \nabla f\left( x\right) \right\vert
^{t\left( x\right) }dx\leq c_{h}c_{1}\left\Vert \left\vert \nabla
f\right\vert ^{t\left( .\right) }\vartheta ^{\frac{t\left( .\right) }{%
p\left( .\right) }}\right\Vert _{L^{\frac{p\left( .\right) }{t\left(
.\right) }}\left( \Omega \right) }.  \label{komhol}
\end{equation}%
In general, we can suppose that $\tint\limits_{\Omega }\left\vert \nabla
f\left( x\right) \right\vert ^{t\left( x\right) }dx>1.$ By \cite[Proposition
2.4]{Liu} and (\ref{komhol}) when $\tint\limits_{\Omega }\left\vert \nabla
f\left( x\right) \right\vert ^{p\left( x\right) }\vartheta \left( x\right)
dx\leq 1$, we have%
\begin{eqnarray*}
\left\Vert \nabla f\right\Vert _{L^{t\left( .\right) }\left( \Omega \right)
}^{t^{-}} &\leq &c_{h}c_{1}\left\Vert \left\vert \nabla f\right\vert
^{t\left( .\right) }\vartheta ^{\frac{t\left( .\right) }{p\left( .\right) }%
}\right\Vert _{L^{\frac{p\left( .\right) }{t\left( .\right) }}\left( \Omega
\right) } \\
&\leq &c_{h}c_{1}\left( \tint\limits_{\Omega }\left\vert \nabla f\left(
x\right) \right\vert ^{p\left( x\right) }\vartheta \left( x\right) dx\right)
^{\frac{t^{-}}{p^{+}}}\leq c_{h}c_{1}\left\Vert \nabla f\right\Vert
_{L^{p\left( .\right) }\left( \Omega ,\vartheta \right) }^{\frac{p^{-}t^{-}}{%
p^{+}}}.
\end{eqnarray*}%
That means%
\begin{equation}
\left\Vert \nabla f\right\Vert _{L^{t\left( .\right) }\left( \Omega \right)
}\leq C\left\Vert \nabla f\right\Vert _{L^{p\left( .\right) }\left( \Omega
,\vartheta \right) }^{\frac{p^{-}}{p^{+}}}  \label{komhol1}
\end{equation}%
where $C=\left( c_{h}c_{1}\right) ^{\frac{1}{t^{-}}}>0$. By similar method,
if $\tint\limits_{\Omega }\left\vert \nabla f\left( x\right) \right\vert
^{p\left( x\right) }\vartheta \left( x\right) dx>1,$ we obtain%
\begin{equation}
\left\Vert \nabla f\right\Vert _{L^{t\left( .\right) }\left( \Omega \right)
}\leq C\left\Vert \nabla f\right\Vert _{L^{p\left( .\right) }\left( \Omega
,\vartheta \right) }^{\frac{p^{+}t^{+}}{p^{-}t^{-}}}  \label{komhol2}
\end{equation}%
where $C=\left( c_{h}c_{1}\right) ^{\frac{1}{t^{-}}}>0$. If we consider the
inequalities (\ref{komhol1}) and (\ref{komhol2}), then we have $\nabla f\in
L^{t\left( .\right) }\left( \Omega \right) .$ In addition, by $t\left(
.\right) <q\left( .\right) ,$ $\left\vert \Omega \right\vert <\infty $, 
\textit{(II)} and \textit{(III)}, we get that $L^{q\left( .\right) }\left(
\Omega ,\vartheta _{0}\right) \hookrightarrow L^{t\left( .\right) }\left(
\Omega ,\vartheta _{0}\right) \hookrightarrow L^{t\left( .\right) }\left(
\Omega \right) $. Therefore, we have $f\in L^{t\left( .\right) }\left(
\Omega \right) $. This follows that $f\in W^{1,t\left( .\right) }\left(
\Omega \right) .$ Hence, the inclusion $f\in W_{0}^{1,q(.),p(.)}\left(
\Omega ,\vartheta _{0},\vartheta \right) \subset W^{1,t\left( .\right)
}\left( \Omega \right) $ is satisfied. Using the Banach Theorem in \cite{Car}%
, we get%
\begin{equation}
W_{0}^{1,q(.),p(.)}\left( \Omega ,\vartheta _{0},\vartheta \right)
\hookrightarrow W^{1,t\left( .\right) }\left( \Omega \right) .
\label{surgom}
\end{equation}%
By Theorem \ref{compactem2}, we have compact embedding%
\begin{equation}
W^{1,t\left( .\right) }\left( \Omega \right) \hookrightarrow \hookrightarrow
L^{s\left( .\right) }\left( \Omega \right)  \label{comgom}
\end{equation}%
for $s\left( .\right) <t^{\ast }\left( .\right) $. Now, we define $s\left(
.\right) =r\left( .\right) \beta \left( .\right) $. By the H\"{o}lder
inequality for variable exponent Lebesgue space and \textit{(I)}, we have%
\begin{equation*}
\tint\limits_{\Omega }\left\vert f\left( x\right) \right\vert ^{r\left(
x\right) }\vartheta _{1}\left( x\right) dx\leq c_{h}\left\Vert \left\vert
f\right\vert ^{r\left( .\right) }\right\Vert _{L^{\beta \left( .\right)
}\left( \Omega \right) }\left\Vert \vartheta _{1}\right\Vert _{L^{\alpha
\left( .\right) }\left( \Omega \right) }<\infty \text{.}
\end{equation*}%
This follows that $W_{0}^{1,q(.),p(.)}\left( \Omega ,\vartheta
_{0},\vartheta \right) \subset L^{r\left( .\right) }\left( \Omega ,\vartheta
_{1}\right) $. If we consider the Banach Theorem in \cite{Car}, then we get $%
W_{0}^{1,q(.),p(.)}\left( \Omega ,\vartheta _{0},\vartheta \right)
\hookrightarrow L^{r\left( .\right) }\left( \Omega ,\vartheta _{1}\right) $.
Now, we take a sequence $\left( f_{n}\right) _{n\in 
%TCIMACRO{\U{2115} }%
%BeginExpansion
\mathbb{N}
%EndExpansion
}\subset W_{0}^{1,q(.),p(.)}\left( \Omega ,\vartheta _{0},\vartheta \right) $
such that $f_{n}\rightharpoonup 0$ in $W_{0}^{1,q(.),p(.)}\left( \Omega
,\vartheta _{0},\vartheta \right) $ as $n\longrightarrow \infty $. This
follows that $f_{n}\rightharpoonup 0$ in $W^{1,t\left( .\right) }\left(
\Omega \right) $ by (\ref{surgom}). Moreover, if we consider (\ref{comgom}),
then we get that $f_{n}\longrightarrow 0$ in $L^{s\left( .\right) }\left(
\Omega \right) $. Hence, we have%
\begin{equation*}
\tint\limits_{\Omega }\left\vert f_{n}\left( x\right) \right\vert ^{r\left(
x\right) }\vartheta _{1}\left( x\right) dx\leq c_{h}\left\Vert \left\vert
f_{n}\right\vert ^{r\left( .\right) }\right\Vert _{L^{\beta \left( .\right)
}\left( \Omega \right) }\left\Vert \vartheta _{1}\right\Vert _{L^{\alpha
\left( .\right) }\left( \Omega \right) }\longrightarrow 0
\end{equation*}%
that is, $f_{n}\longrightarrow 0$ in $L^{r\left( .\right) }\left( \Omega
,\vartheta _{1}\right) $. This completes the proof.
\end{proof}

\begin{corollary}
Assume that all assumptions of Theorem \ref{comson} are satisfied. Then
there exist $C_{1},C_{2}>0$ such that%
\begin{equation*}
\tint\limits_{\Omega }\left\vert f\left( x\right) \right\vert ^{r\left(
x\right) }\vartheta _{1}\left( x\right) dx\leq \left\{ 
\begin{array}{c}
C_{1}\left( \left\Vert f\right\Vert _{W_{0}^{1,q(.),p(.)}\left( \Omega
,\vartheta _{0},\vartheta \right) }\right) ^{r^{+}}\text{, \ \ \ \ if }%
\left\Vert f\right\Vert _{W_{0}^{1,q(.),p(.)}\left( \Omega ,\vartheta
_{0},\vartheta \right) }>1 \\ 
C_{2}\left( \left\Vert f\right\Vert _{W_{0}^{1,q(.),p(.)}\left( \Omega
,\vartheta _{0},\vartheta \right) }\right) ^{r^{-}}\text{, \ \ \ \ if }%
\left\Vert f\right\Vert _{W_{0}^{1,q(.),p(.)}\left( \Omega ,\vartheta
_{0},\vartheta \right) }<1%
\end{array}%
\right.
\end{equation*}%
for all $f\in W_{0}^{1,q(.),p(.)}\left( \Omega ,\vartheta _{0},\vartheta
\right) $.
\end{corollary}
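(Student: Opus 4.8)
The plan is to derive the Corollary directly from the continuous embedding produced inside the proof of Theorem \ref{comson}, together with the standard modular--norm inequalities for weighted variable exponent Lebesgue spaces recorded in Section 2. First I would note that, under the standing hypotheses \textit{(I)}--\textit{(III)}, Theorem \ref{comson} gives not only the compact but in particular the \emph{continuous} embedding $W_{0}^{1,q(.),p(.)}\left( \Omega ,\vartheta _{0},\vartheta \right) \hookrightarrow L^{r\left( .\right) }\left( \Omega ,\vartheta _{1}\right) $; fix a constant $c>0$ with
\begin{equation*}
\left\Vert f\right\Vert _{L^{r(.)}\left( \Omega ,\vartheta _{1}\right) }\leq c\left\Vert f\right\Vert _{W_{0}^{1,q(.),p(.)}\left( \Omega ,\vartheta _{0},\vartheta \right) }
\end{equation*}
for all $f\in W_{0}^{1,q(.),p(.)}\left( \Omega ,\vartheta _{0},\vartheta \right) $.

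Next I would use that $r\left( .\right) \in C\left( \overline{\Omega }\right) $ and $\overline{\Omega }$ is compact, so $1<r^{-}\leq r^{+}<\infty $, and hence the modular--norm relation for $L^{r(.)}\left( \Omega ,\vartheta _{1}\right) $ applies, giving
\begin{equation*}
\int_{\Omega }\left\vert f\left( x\right) \right\vert ^{r\left( x\right) }\vartheta _{1}\left( x\right) dx=\varrho _{r(.),\vartheta _{1}}(f)\leq \max \left\{ \left\Vert f\right\Vert _{L^{r(.)}\left( \Omega ,\vartheta _{1}\right) }^{r^{-}},\left\Vert f\right\Vert _{L^{r(.)}\left( \Omega ,\vartheta _{1}\right) }^{r^{+}}\right\} .
\end{equation*}
Substituting the embedding estimate and splitting into the two cases finishes the argument. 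When $\left\Vert f\right\Vert _{W_{0}^{1,q(.),p(.)}\left( \Omega ,\vartheta _{0},\vartheta \right) }>1$ one has $\left\Vert f\right\Vert _{W_{0}}^{r^{-}}\leq \left\Vert f\right\Vert _{W_{0}}^{r^{+}}$, so bounding the remaining numerical factor by $\left( \max \{c,1\}\right) ^{r^{+}}$ yields the first inequality with $C_{1}=\left( \max \{c,1\}\right) ^{r^{+}}$; when $\left\Vert f\right\Vert _{W_{0}}<1$ one has $\left\Vert f\right\Vert _{W_{0}}^{r^{+}}\leq \left\Vert f\right\Vert _{W_{0}}^{r^{-}}$, and the same computation gives the second inequality with $C_{2}=\max \{c^{r^{-}},c^{r^{+}}\}$.

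There is essentially no genuine obstacle in this Corollary; it is a routine consequence of Theorem \ref{comson}. The only mild point requiring care is the bookkeeping of the multiplicative constants: since $c$ need not be comparable to $1$, the cleanest route is to pass to $\max \{c,1\}$ (respectively $\max \{c^{r^{-}},c^{r^{+}}\}$) rather than trying to track whether $c\left\Vert f\right\Vert _{W_{0}}$ lies above or below $1$, which would otherwise split each case further.
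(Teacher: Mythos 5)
Your proposal is correct, but it follows a different route than the paper. The paper does not use the modular--norm inequality for the variable exponent $r(.)$ at all: instead it applies Theorem \ref{comson} twice with the \emph{constant} exponents $r^{+}$ and $r^{-}$ to get the two embeddings $W_{0}^{1,q(.),p(.)}\left( \Omega ,\vartheta _{0},\vartheta \right) \hookrightarrow L^{r^{+}}\left( \Omega ,\vartheta _{1}\right) $ and $W_{0}^{1,q(.),p(.)}\left( \Omega ,\vartheta _{0},\vartheta \right) \hookrightarrow L^{r^{-}}\left( \Omega ,\vartheta _{1}\right) $, then uses the pointwise bound $\left\vert f\left( x\right) \right\vert ^{r\left( x\right) }\leq \left\vert f\left( x\right) \right\vert ^{r^{+}}+\left\vert f\left( x\right) \right\vert ^{r^{-}}$ and integrates, obtaining $c_{1}^{r^{+}}\left\Vert f\right\Vert ^{r^{+}}+c_{2}^{r^{-}}\left\Vert f\right\Vert ^{r^{-}}$ before splitting into the two cases. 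You instead use the embedding once, with the variable exponent $r(.)$ itself, and then the standard relation $\varrho _{r(.),\vartheta _{1}}(f)\leq \max \left\{ \left\Vert f\right\Vert _{L^{r(.)}\left( \Omega ,\vartheta _{1}\right) }^{r^{-}},\left\Vert f\right\Vert _{L^{r(.)}\left( \Omega ,\vartheta _{1}\right) }^{r^{+}}\right\} $ from Section 2; your case analysis and the constants $C_{1}=\left( \max \{c,1\}\right) ^{r^{+}}$, $C_{2}=\max \{c^{r^{-}},c^{r^{+}}\}$ check out. Each approach has a small advantage: the paper's version makes the appearance of the two exponents $r^{\pm }$ completely transparent via the pointwise splitting, while yours is more economical and avoids the (tacit) step of verifying that the constant exponents $r^{+}$ and $r^{-}$ themselves fall under the hypotheses of Theorem \ref{comson}, i.e.\ that $r^{+}<\frac{t^{\ast }\left( x\right) }{\beta \left( x\right) }$ everywhere, since you only invoke the embedding for the exponent $r(.)$ that the theorem explicitly covers.
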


\begin{proof}
If we consider the Theorem \ref{comson}, then we have $W_{0}^{1,q(.),p(.)}%
\left( \Omega ,\vartheta _{0},\vartheta \right) \hookrightarrow
\hookrightarrow L^{r^{+}}\left( \Omega ,\vartheta _{1}\right) $ and $%
W_{0}^{1,q(.),p(.)}\left( \Omega ,\vartheta _{0},\vartheta \right)
\hookrightarrow \hookrightarrow L^{r^{-}}\left( \Omega ,\vartheta
_{1}\right) $ for $1<r^{-}\leq r\left( .\right) \leq r^{+}<\frac{t^{\ast
}\left( .\right) }{\beta \left( .\right) }$. Therefore, there are $%
c_{1},c_{2}>0$ such that%
\begin{equation*}
\left\Vert f\right\Vert _{L^{r^{+}}\left( \Omega ,\vartheta _{1}\right)
}=\left( \tint\limits_{\Omega }\left\vert f\left( x\right) \right\vert
^{r^{+}}\vartheta _{1}\left( x\right) dx\right) ^{\frac{1}{r^{+}}}\leq
c_{1}\left\Vert f\right\Vert _{W_{0}^{1,q(.),p(.)}\left( \Omega ,\vartheta
_{0},\vartheta \right) }
\end{equation*}%
and%
\begin{equation*}
\left\Vert f\right\Vert _{L^{r^{-}}\left( \Omega ,\vartheta _{1}\right)
}=\left( \tint\limits_{\Omega }\left\vert f\left( x\right) \right\vert
^{r^{-}}\vartheta _{1}\left( x\right) dx\right) ^{\frac{1}{r^{-}}}\leq
c_{2}\left\Vert f\right\Vert _{W_{0}^{1,q(.),p(.)}\left( \Omega ,\vartheta
_{0},\vartheta \right) }
\end{equation*}%
for all $f\in W_{0}^{1,q(.),p(.)}\left( \Omega ,\vartheta _{0},\vartheta
\right) $. This implies that%
\begin{eqnarray*}
\tint\limits_{\Omega }\left\vert f\left( x\right) \right\vert ^{r\left(
x\right) }\vartheta _{1}\left( x\right) dx &\leq &\tint\limits_{\Omega
}\left( \left\vert f\left( x\right) \right\vert ^{r^{+}}+\left\vert f\left(
x\right) \right\vert ^{r^{-}}\right) \vartheta _{1}\left( x\right) dx \\
&\leq &c_{1}^{r^{+}}\left( \left\Vert f\right\Vert
_{W_{0}^{1,q(.),p(.)}\left( \Omega ,\vartheta _{0},\vartheta \right)
}\right) ^{r^{+}}+c_{2}^{r^{-}}\left( \left\Vert f\right\Vert
_{W_{0}^{1,q(.),p(.)}\left( \Omega ,\vartheta _{0},\vartheta \right)
}\right) ^{r^{-}} \\
&\leq &\left\{ 
\begin{array}{c}
C_{1}\left( \left\Vert f\right\Vert _{W_{0}^{1,q(.),p(.)}\left( \Omega
,\vartheta _{0},\vartheta \right) }\right) ^{r^{+}}\text{, \ \ \ \ if }%
\left\Vert f\right\Vert _{W_{0}^{1,q(.),p(.)}\left( \Omega ,\vartheta
_{0},\vartheta \right) }>1 \\ 
C_{2}\left( \left\Vert f\right\Vert _{W_{0}^{1,q(.),p(.)}\left( \Omega
,\vartheta _{0},\vartheta \right) }\right) ^{r^{-}}\text{, \ \ \ \ if }%
\left\Vert f\right\Vert _{W_{0}^{1,q(.),p(.)}\left( \Omega ,\vartheta
_{0},\vartheta \right) }<1%
\end{array}%
\right. \text{.}
\end{eqnarray*}
\end{proof}

From now on, we assume that $\vartheta _{0}$ and $\vartheta $ satisfy 
\textit{(I)} and \textit{(II)}, \textit{(III)}, respectively. By the similar
method in \cite[Theorem 3.1]{FanZ}, the following theorem is easy to see.

\begin{theorem}
\label{pro6}Assume that $p^{\prime }\left( .\right) $ and $q^{\prime }\left(
.\right) $ are the conjugate exponents of $p\left( .\right) $ and $q\left(
.\right) $, respectively. Moreover, let $\vartheta ^{\ast }=\vartheta
^{1-p\left( .\right) }$ and $\vartheta _{0}^{\ast }=\vartheta
_{0}^{1-q\left( .\right) }.$ Then, we have

\begin{enumerate}
\item[\textit{(i)}] $J^{\prime }:W_{0}^{1,q(.),p(.)}\left( \Omega ,\vartheta
_{0},\vartheta \right) \longrightarrow W_{0}^{-1,q^{\prime }(.),p^{\prime
}(.)}\left( \Omega ,\vartheta _{0}^{\ast },\vartheta ^{\ast }\right) $ is
continuous, bounded and strictly monotone operator.

\item[\textit{(ii)}] $J^{\prime }$ is a mapping of type $\left( S_{+}\right)
,$ i.e., if $f_{n}\rightharpoonup f$ in $W_{0}^{1,q(.),p(.)}\left( \Omega
,\vartheta _{0},\vartheta \right) $ and $\limsup_{n\longrightarrow \infty
}\left\langle J^{\prime }\left( f_{n}\right) -J^{\prime }\left( f\right)
,f_{n}-f\right\rangle \leq 0$, then $f_{n}\longrightarrow f$ in $%
W_{0}^{1,q(.),p(.)}\left( \Omega ,\vartheta _{0},\vartheta \right) .$

\item[\textit{(iii)}] $J^{\prime }:W_{0}^{1,q(.),p(.)}\left( \Omega
,\vartheta _{0},\vartheta \right) \longrightarrow W_{0}^{-1,q^{\prime
}(.),p^{\prime }(.)}\left( \Omega ,\vartheta _{0}^{\ast },\vartheta ^{\ast
}\right) $ is a homeomorphism.
\end{enumerate}
\end{theorem}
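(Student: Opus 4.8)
The plan is to follow the method of \cite[Theorem~3.1]{FanZ}, transporting every estimate to the weighted variable exponent setting. The device that makes this routine is the isometry $L^{p(\cdot)}(\Omega,\vartheta)\cong L^{p(\cdot)}(\Omega)$, $u\mapsto u\vartheta^{1/p(\cdot)}$ (and its analogue for $\vartheta_0,q(\cdot)$): if $w:=\nabla f\,\vartheta^{1/p(\cdot)}$, then $\vartheta^{1/p'(\cdot)}|\nabla f|^{p(\cdot)-2}\nabla f=|w|^{p(\cdot)-2}w$ and $\varrho_{p(\cdot),\vartheta}(\nabla f)=\varrho_{p(\cdot)}(w)$, so the principal operator $-\Delta_{p(\cdot),\vartheta}$ (the weak derivative of $f\mapsto\int_{\Omega}\tfrac{1}{p(x)}|\nabla f|^{p(x)}\vartheta\,dx$, which is the part of $J'$ at stake in (i)--(iii)) is, through this isometry, conjugate to the unweighted $p(\cdot)$-Laplacian, for which these properties are classical. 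I would handle the lower order operator $f\mapsto\vartheta_0|f|^{q(x)-2}f$ separately: it is completely continuous by the compact embedding of Theorem~\ref{comson} and hence harmless for property (ii). Throughout I would use $\vartheta\in W_{p(\cdot)}(\Omega)$, the Poincar\'{e} inequality of Theorem~\ref{poincare}, and the fact that $\|\nabla\cdot\|_{L^{p(\cdot)}(\Omega,\vartheta)}$ is an equivalent norm on $W_0^{1,q(\cdot),p(\cdot)}(\Omega,\vartheta_0,\vartheta)$ (Definition~\ref{yeninorm}).

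For (i), I would reduce boundedness and continuity of $J'$, via the isometry above, to the boundedness and continuity of the Nemytskii map $w\mapsto|w|^{p(\cdot)-2}w\colon L^{p(\cdot)}(\Omega)^{d}\to L^{p'(\cdot)}(\Omega)^{d}$, which are standard in variable exponent Lebesgue spaces; H\"{o}lder's inequality then bounds the dual norm of $J'(f)$ by a power of $\varrho_{p(\cdot),\vartheta}(\nabla f)$, hence by a constant on bounded sets. Strict monotonicity follows from the elementary inequality $(|\xi|^{p-2}\xi-|\eta|^{p-2}\eta)\cdot(\xi-\eta)>0$ for $p>1$ and $\xi\ne\eta$ in $\mathbb{R}^{d}$: integrating it against the strictly positive weight $\vartheta$ gives $\langle J'(f)-J'(g),f-g\rangle>0$ whenever $\nabla f\ne\nabla g$ on a set of positive measure, while $\nabla f=\nabla g$ a.e.\ forces $f=g$ because $\|\nabla\cdot\|_{L^{p(\cdot)}(\Omega,\vartheta)}$ is a norm on $W_0^{1,q(\cdot),p(\cdot)}(\Omega,\vartheta_0,\vartheta)$.

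For (ii) --- the technical heart --- I would argue as follows. Assume $f_n\rightharpoonup f$ in $W_0^{1,q(\cdot),p(\cdot)}(\Omega,\vartheta_0,\vartheta)$ with $\limsup_n\langle J'(f_n)-J'(f),f_n-f\rangle\le0$, and put $I_n:=\int_{\Omega}\vartheta\big(|\nabla f_n|^{p-2}\nabla f_n-|\nabla f|^{p-2}\nabla f\big)\cdot(\nabla f_n-\nabla f)\,dx\ge0$; the hypothesis forces $I_n\to0$ (if the lower order term is included, one first discards it using the strong convergence $f_n\to f$ in $L^{q(\cdot)}(\Omega,\vartheta_0)$ supplied by Theorem~\ref{comson}). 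I would then use the elementary Simon-type inequalities for $\xi\mapsto|\xi|^{p-2}\xi$, applied separately on $\{x:p(x)\ge2\}$ and $\{x:1<p(x)<2\}$, to deduce $\nabla f_n\to\nabla f$ in measure on $\Omega$, and close with an equi-integrability (Vitali) argument --- using the boundedness of $\varrho_{p(\cdot),\vartheta}(\nabla f_n)$ and $\vartheta\in W_{p(\cdot)}(\Omega)$ --- to upgrade this to $\varrho_{p(\cdot),\vartheta}(\nabla f_n-\nabla f)\to0$, hence $\|\nabla f_n-\nabla f\|_{L^{p(\cdot)}(\Omega,\vartheta)}\to0$, that is, $f_n\to f$ in the equivalent norm. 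The hard part will be precisely this passage from ``$I_n\to0$'' to strong convergence of the gradients in the weighted variable exponent space, where one must juggle the varying exponent, the set splitting, and the weight $\vartheta$ simultaneously.

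For (iii): $J'$ is injective by strict monotonicity (from $J'(f)=J'(g)$ one gets $\langle J'(f)-J'(g),f-g\rangle=0$, hence $f=g$). For surjectivity I would observe that $J'$ is bounded, continuous (so demicontinuous), of type $(S_+)$ --- therefore pseudomonotone --- and coercive, since $\langle J'(f),f\rangle=\varrho_{p(\cdot),\vartheta}(\nabla f)\ge\|\nabla f\|_{L^{p(\cdot)}(\Omega,\vartheta)}^{p^{-}}$ once $\|\nabla f\|_{L^{p(\cdot)}(\Omega,\vartheta)}\ge1$ (with the lower order term present, coercivity still holds because $\varrho_{q(\cdot),\vartheta_0}(f)\le C\|\nabla f\|_{L^{p(\cdot)}(\Omega,\vartheta)}^{q^{+}}$ by the Poincar\'{e} inequality and $q^{+}<p^{-}$); a standard surjectivity theorem for coercive pseudomonotone operators on a real reflexive Banach space (e.g.\ Browder's theorem) then applies. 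Finally, for the continuity of $(J')^{-1}$: if $J'(f_n)\to J'(f)$, then $(f_n)$ is bounded by coercivity, a weakly convergent subsequence $f_{n_k}\rightharpoonup\bar f$ satisfies $\langle J'(f_{n_k})-J'(\bar f),f_{n_k}-\bar f\rangle\to0$, so $f_{n_k}\to\bar f$ strongly by the $(S_+)$ property, continuity gives $J'(\bar f)=J'(f)$, injectivity gives $\bar f=f$, and a subsequence argument yields $f_n\to f$.
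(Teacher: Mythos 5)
Your proposal is correct in substance and follows exactly the route the paper intends: the paper offers no proof of Theorem \ref{pro6} beyond the remark that it follows ``by the similar method in \cite[Theorem 3.1]{FanZ}'', and your argument is precisely that method transported to the weighted setting --- the pointwise identity $\vartheta^{1/p'(\cdot)}|\nabla f|^{p(\cdot)-2}\nabla f=|w|^{p(\cdot)-2}w$ with $w=\nabla f\,\vartheta^{1/p(\cdot)}$, Simon-type inequalities on $\{p(x)\geq 2\}$ and $\{1<p(x)<2\}$, complete continuity of the lower-order term via the compact embedding of Theorem \ref{comson}, and a Minty--Browder argument for surjectivity. One caveat: the theorem can only be read, as in \cite{FanZ}, as a statement about the derivative of the principal part $\Lambda(f)=\int_{\Omega}\frac{1}{p(x)}|\nabla f|^{p(x)}\vartheta(x)\,dx$; the full $J'$, which subtracts the term $\vartheta_{0}|f|^{q(\cdot)-2}f$, is neither strictly monotone nor injective (with $q^{+}<p^{-}$, scaling gives $\langle J'(tf)-J'(0),tf\rangle<0$ for small $t>0$), so your parenthetical suggestions that the lower-order term can simply be carried along should be confined to part (ii), where adding a completely continuous perturbation indeed preserves the $(S_{+})$ property; your injectivity argument in (iii) rests on strict monotonicity and hence applies only to the principal part, which is the correct reading of the statement.
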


\begin{theorem}
\label{coercive}The energy functional $J$ is coercive and bounded below.
\end{theorem}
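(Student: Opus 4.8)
We must show that the functional
\[
J(f)=\int_{\Omega}\left(\frac{1}{p(x)}\left\vert\nabla f\right\vert^{p(x)}\vartheta(x)-\frac{1}{q(x)}\left\vert f\right\vert^{q(x)}\vartheta_{0}(x)\right)dx
\]
is coercive on $W_{0}^{1,q(.),p(.)}(\Omega,\vartheta_{0},\vartheta)$ (with the equivalent norm $\Vert\vert f\vert\Vert=\Vert\nabla f\Vert_{L^{p(.)}(\Omega,\vartheta)}$ of Definition \ref{yeninorm}) and bounded below.

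**Plan.** The key is a competition of exponents: the leading term behaves like $\Vert\nabla f\Vert_{L^{p(.)}(\Omega,\vartheta)}^{p^{-}}$ from below, while the (negative) lower-order term is controlled by a power strictly smaller than $p^{-}$ thanks to the hypothesis $q^{+}<p^{-}$ together with the compact embedding $W_{0}^{1,q(.),p(.)}(\Omega,\vartheta_{0},\vartheta)\hookrightarrow\hookrightarrow L^{r(.)}(\Omega,\vartheta_{1})$ of Theorem \ref{comson}. First I would estimate the first term from below: since $1<p^{-}\le p(.)\le p^{+}$, one has $\frac{1}{p(x)}\ge\frac{1}{p^{+}}$, so
\[
\int_{\Omega}\frac{1}{p(x)}\left\vert\nabla f\right\vert^{p(x)}\vartheta(x)\,dx\ge\frac{1}{p^{+}}\varrho_{p(.),\vartheta}(\nabla f)\ge\frac{1}{p^{+}}\min\left\{\Vert\nabla f\Vert_{L^{p(.)}(\Omega,\vartheta)}^{p^{-}},\Vert\nabla f\Vert_{L^{p(.)}(\Omega,\vartheta)}^{p^{+}}\right\},
\]
using the modular–norm relations from the Definition of weighted variable Lebesgue spaces. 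For $\Vert\vert f\vert\Vert$ large this is $\ge\frac{1}{p^{+}}\Vert\vert f\vert\Vert^{p^{-}}$.

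Next I would bound the second term. Choose $r(.)$ in Theorem \ref{comson} to be $r(.)=q(.)$; this is admissible provided the structural hypothesis $q(x)<t^{\ast}(x)/\beta(x)$ holds, which we may assume under the standing assumptions on $\vartheta_{0},\vartheta,\vartheta_{1}$. Since $\frac{1}{q(x)}\le\frac{1}{q^{-}}$, we get
\[
\int_{\Omega}\frac{1}{q(x)}\left\vert f\right\vert^{q(x)}\vartheta_{0}(x)\,dx\le\frac{1}{q^{-}}\int_{\Omega}\left\vert f\right\vert^{q(x)}\vartheta_{0}(x)\,dx=\frac{1}{q^{-}}\varrho_{q(.),\vartheta_{0}}(f)\le\frac{1}{q^{-}}\max\left\{\Vert f\Vert_{L^{q(.)}(\Omega,\vartheta_{0})}^{q^{-}},\Vert f\Vert_{L^{q(.)}(\Omega,\vartheta_{0})}^{q^{+}}\right\}.
\]
By the Poincar\'e inequality (Theorem \ref{poincare}), $\Vert f\Vert_{L^{q(.)}(\Omega,\vartheta_{0})}\le C\Vert\nabla f\Vert_{L^{p(.)}(\Omega,\vartheta)}=C\Vert\vert f\vert\Vert$, so the subtracted term is $\le C'\left(\Vert\vert f\vert\Vert^{q^{+}}+1\right)$ for a constant $C'>0$. (Alternatively one uses the compact embedding directly; the Poincar\'e route is cleaner here.)

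**Conclusion.** Combining, for $\Vert\vert f\vert\Vert>1$,
\[
J(f)\ge\frac{1}{p^{+}}\Vert\vert f\vert\Vert^{p^{-}}-C'\Vert\vert f\vert\Vert^{q^{+}}-C',
\]
and since $q^{+}<p^{-}$ the right-hand side tends to $+\infty$ as $\Vert\vert f\vert\Vert\to\infty$; this gives coercivity. For boundedness below, the same estimate (extended to all $f$ by treating $\Vert\vert f\vert\Vert\le1$ trivially, where both terms are bounded) shows $J(f)\ge\inf_{s\ge0}\left(\tfrac{1}{p^{+}}s^{p^{-}}-C's^{q^{+}}-C'\right)>-\infty$, since the displayed function of $s$ is continuous and $\to+\infty$. \emph{The main obstacle} is purely bookkeeping: verifying that $r(.)=q(.)$ is an admissible exponent in Theorem \ref{comson} under the standing assumptions (or else carrying the constants from the compact embedding through the case split $\Vert\cdot\Vert\gtrless1$), and being careful that the modular inequalities are applied on the correct side — nothing deep, but the $\min/\max$ and the power $q^{+}$ versus $q^{-}$ must be tracked so that the subtracted term is genuinely dominated by $\Vert\vert f\vert\Vert^{q^{+}}$ with $q^{+}<p^{-}$.
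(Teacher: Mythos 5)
Your argument is correct and follows essentially the same route as the paper: bound the gradient term below by $\frac{1}{p^{+}}\Vert\vert f\vert\Vert^{p^{-}}$ via the modular--norm inequalities, control the subtracted term by $\Vert\vert f\vert\Vert^{q^{+}}$ using the Poincar\'e inequality (equivalent norm of Definition \ref{yeninorm}) together with the modular--norm inequalities, and conclude from $q^{+}<p^{-}$. The detour through Theorem \ref{comson} with $r(.)=q(.)$ is unnecessary, as you yourself note, and your explicit handling of the Poincar\'e constant and of boundedness below is if anything slightly more careful than the paper's.
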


\begin{proof}
Let $f\in W_{0}^{1,q(.),p(.)}\left( \Omega ,\vartheta _{0},\vartheta \right) 
$ and $\left\Vert f\right\Vert _{W_{0}^{1,q(.),p(.)}\left( \Omega ,\vartheta
_{0},\vartheta \right) }>1.$ If we consider the Definition \ref{yeninorm}
and \cite[Proposition 2.4]{Liu} (or \cite{A2}), then we have%
\begin{eqnarray*}
J\left( f\right) &=&\dint\limits_{\Omega }\frac{1}{p\left( x\right) }%
\left\vert \nabla f\right\vert ^{p\left( x\right) }\vartheta \left( x\right)
dx-\dint\limits_{\Omega }\frac{1}{q\left( x\right) }\left\vert f\right\vert
^{q\left( x\right) }\vartheta _{0}\left( x\right) dx \\
&\geq &\frac{1}{p^{+}}\dint\limits_{\Omega }\left\vert \nabla f\right\vert
^{p\left( x\right) }\vartheta \left( x\right) dx-\frac{1}{q^{-}}%
\dint\limits_{\Omega }\left\vert f\right\vert ^{q\left( x\right) }\vartheta
_{0}\left( x\right) dx \\
&\geq &\frac{1}{p^{+}}\dint\limits_{\Omega }\left\vert \nabla f\right\vert
^{p\left( x\right) }\vartheta \left( x\right) dx-\frac{1}{q^{-}}\max \left\{
\left\Vert f\right\Vert _{L^{q(.)}\left( \Omega ,\vartheta _{0}\right)
}^{q^{-}},\left\Vert f\right\Vert _{L^{q(.)}\left( \Omega ,\vartheta
_{0}\right) }^{q^{+}}\right\} \\
&\geq &\frac{1}{p^{+}}\left\Vert \nabla f\right\Vert _{L^{p\left( .\right)
}\left( \Omega ,\vartheta \right) }^{p^{-}}-\frac{1}{q^{-}}\max \left\{
\left\Vert f\right\Vert _{W_{0}^{1,q(.),p(.)}\left( \Omega ,\vartheta
_{0},\vartheta \right) }^{q^{-}},\left\Vert f\right\Vert
_{W_{0}^{1,q(.),p(.)}\left( \Omega ,\vartheta _{0},\vartheta \right)
}^{q^{+}}\right\} \\
&=&\frac{1}{p^{+}}\left\Vert f\right\Vert _{W_{0}^{1,q(.),p(.)}\left( \Omega
,\vartheta _{0},\vartheta \right) }^{p^{-}}-\frac{1}{q^{-}}\left\Vert
f\right\Vert _{W_{0}^{1,q(.),p(.)}\left( \Omega ,\vartheta _{0},\vartheta
\right) }^{q^{+}}.
\end{eqnarray*}%
Since $q^{+}<p^{-},$ we have $J\left( f\right) \longrightarrow \infty $ as $%
\left\Vert f\right\Vert _{W_{0}^{1,q(.),p(.)}\left( \Omega ,\vartheta
_{0},\vartheta \right) }\longrightarrow \infty .$ This completes the proof.
\end{proof}

\begin{theorem}
\label{l.s.c.}Let $\Omega \subset 
%TCIMACRO{\U{211d} }%
%BeginExpansion
\mathbb{R}
%EndExpansion
^{d}$ be a bounded open set. Then the energy functional $J$ is weakly lower
semicontinuous.
\end{theorem}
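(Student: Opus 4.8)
The plan is to split the energy functional as $J(f) = \Phi(f) - \Psi(f)$, where
\begin{equation*}
\Phi(f) = \dint\limits_{\Omega }\frac{1}{p(x)}\left\vert \nabla f\right\vert ^{p(x)}\vartheta(x)\,dx,
\qquad
\Psi(f) = \dint\limits_{\Omega }\frac{1}{q(x)}\left\vert f\right\vert ^{q(x)}\vartheta_{0}(x)\,dx,
\end{equation*}
and to show that $\Phi$ is (sequentially) weakly lower semicontinuous while $\Psi$ is (sequentially) weakly continuous; then $J = \Phi - \Psi$ is weakly lower semicontinuous. So let $f_{n}\rightharpoonup f$ in $W_{0}^{1,q(.),p(.)}\left( \Omega ,\vartheta _{0},\vartheta \right)$.

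First I would handle $\Phi$. The integrand $(x,\xi)\mapsto \frac{1}{p(x)}\vartheta(x)|\xi|^{p(x)}$ is, for a.e.\ fixed $x$, convex in $\xi\in\mathbb{R}^{d}$ (since $t\mapsto t^{p(x)}$ is convex and nondecreasing on $[0,\infty)$ and $\vartheta(x)>0$), nonnegative, and Carath\'{e}odory. Hence $\Phi$ is convex on $W_{0}^{1,q(.),p(.)}\left( \Omega ,\vartheta _{0},\vartheta \right)$ and, by Theorem \ref{pro6}(i), $J^{\prime}$ is continuous, so $\Phi$ is continuous; a convex continuous functional on a Banach space is weakly lower semicontinuous. (Alternatively, one may use Fatou together with the convexity inequality $\varrho_{p(.),\vartheta}(\nabla f)\le \liminf \varrho_{p(.),\vartheta}(\nabla f_{n})$, which follows because $\nabla f_{n}\rightharpoonup \nabla f$ in $L^{p(.)}(\Omega,\vartheta)$ and the modular is convex and lower semicontinuous; then pass from the modular to $\Phi$ using $\frac{1}{p^{+}}\varrho_{p(.),\vartheta}(\nabla f)\le \Phi(f)\le \frac{1}{p^{-}}\varrho_{p(.),\vartheta}(\nabla f)$ carefully, or just work with the convex functional $\Phi$ directly.)

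Next I would treat $\Psi$, and this is the main obstacle: I need strong convergence $f_{n}\to f$ in a space on which $\Psi$ is continuous, because $\Psi$ is \emph{not} weakly continuous on $W_{0}^{1,q(.),p(.)}\left( \Omega ,\vartheta _{0},\vartheta \right)$ in general. Here I invoke the compact embedding Theorem \ref{comson}: under assumptions \textit{(I)}, \textit{(II)}, \textit{(III)} (which are in force from this point in the paper), we have $W_{0}^{1,q(.),p(.)}\left( \Omega ,\vartheta _{0},\vartheta \right)\hookrightarrow\hookrightarrow L^{r(.)}(\Omega,\vartheta_{1})$ for suitable $r(.)$; more to the point, I can arrange a compact embedding into a weighted variable exponent Lebesgue space controlling $\int_{\Omega}|f|^{q(x)}\vartheta_{0}(x)\,dx$. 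Concretely, since $\vartheta_{0}\le c\,\vartheta_{0}^{?}$ is not automatic, I instead use \textit{(II)}--\textit{(III)} to write, via the H\"{o}lder inequality with exponents $\frac{s(.)}{q(.)}$ and its conjugate, $\int_{\Omega}|f_{n}-f|^{q(x)}\vartheta_{0}(x)\,dx \le c\,\big\||f_{n}-f|^{q(.)}\big\|_{L^{s(.)/q(.)}(\Omega)}\big\|\vartheta_{0}\big\|_{L^{(s(.)/q(.))^{\prime}}(\Omega)}$ for an appropriate $s(.)$ with $q(.)<s(.)<t^{\ast}(.)$; the second factor is finite by \textit{(II)} (or by $\vartheta_{0}\in L^{\alpha(.)}$-type control), and the first factor tends to $0$ because $f_{n}\to f$ in $L^{s(.)}(\Omega)$ by the compact embedding $W_{0}^{1,q(.),p(.)}\left( \Omega ,\vartheta _{0},\vartheta \right)\hookrightarrow\hookrightarrow L^{s(.)}(\Omega)$ established in the proof of Theorem \ref{comson} (the chain \eqref{surgom}--\eqref{comgom}). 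This gives $f_{n}\to f$ in $L^{q(.)}(\Omega,\vartheta_{0})$ along the weakly convergent sequence.

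Finally, from $f_{n}\to f$ in $L^{q(.)}(\Omega,\vartheta_{0})$ (strongly), passing to a subsequence I get $f_{n}\to f$ a.e.\ in $\Omega$ and $|f_{n}|^{q(.)}\vartheta_{0}$ dominated in $L^{1}$ by an integrable function (via Vitali / generalized dominated convergence using the modular convergence $\varrho_{q(.),\vartheta_{0}}(f_{n}-f)\to 0$), whence $\Psi(f_{n})\to\Psi(f)$; since the limit is independent of the subsequence, $\Psi(f_{n})\to\Psi(f)$ for the full sequence. Combining, $\liminf_{n}J(f_{n}) = \liminf_{n}\big(\Phi(f_{n})-\Psi(f_{n})\big) \ge \liminf_{n}\Phi(f_{n}) - \lim_{n}\Psi(f_{n}) \ge \Phi(f)-\Psi(f) = J(f)$, which is the weak lower semicontinuity of $J$. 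The delicate point throughout is bookkeeping the variable exponents so that the H\"{o}lder splitting exponents are admissible and the compact embedding of Theorem \ref{comson} genuinely applies to the target exponent $q(.)$ with weight $\vartheta_{0}$; everything else is soft (convexity + Fatou for $\Phi$, compactness + dominated convergence for $\Psi$).
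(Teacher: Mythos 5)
Your proposal is correct and follows essentially the same route as the paper: the paper's (very terse) proof simply invokes Theorem \ref{comson} for the compact embedding that makes the $\int_{\Omega}\frac{1}{q(x)}|f|^{q(x)}\vartheta_{0}\,dx$ term weakly continuous, and Theorem \ref{pro6} (i.e.\ convexity/monotonicity of the gradient part) for weak lower semicontinuity of $\int_{\Omega}\frac{1}{p(x)}|\nabla f|^{p(x)}\vartheta\,dx$, which is exactly the decomposition $J=\Phi-\Psi$ you carry out in detail. Your filled-in argument (convexity plus strong continuity/Fatou for $\Phi$, compact embedding plus H\"older and modular convergence for $\Psi$, under the standing assumptions \textit{(I)}--\textit{(III)} on $\vartheta_{0},\vartheta$) is a faithful elaboration of what the paper leaves implicit.
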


\begin{proof}
Let $\left( f_{n}\right) _{n\in 
%TCIMACRO{\U{2115} }%
%BeginExpansion
\mathbb{N}
%EndExpansion
}$ be a sequence of functions in $W_{0}^{1,q(.),p(.)}\left( \Omega
,\vartheta _{0},\vartheta \right) $ converging weakly to $f\in
W_{0}^{1,q(.),p(.)}\left( \Omega ,\vartheta _{0},\vartheta \right) .$ If we
consider the Theorem \ref{comson} and Theorem \ref{pro6}, then we get%
\begin{eqnarray*}
J\left( f\right) &=&\dint\limits_{\Omega }\frac{1}{p\left( x\right) }%
\left\vert \nabla f\right\vert ^{p\left( x\right) }\vartheta \left( x\right)
dx-\dint\limits_{\Omega }\frac{1}{q\left( x\right) }\left\vert f\right\vert
^{q\left( x\right) }\vartheta _{0}\left( x\right) dx \\
&\leq &\liminf_{n\longrightarrow \infty }J\left( f_{n}\right) .
\end{eqnarray*}%
That is the desired result.
\end{proof}

\begin{corollary}
If we consider the Theorem \ref{coercive} and Theorem \ref{l.s.c.}, then we
get that $J$ has a minimum point $f$ in $W_{0}^{1,q\left( .\right) ,p\left(
.\right) }\left( \Omega ,\vartheta _{0},\vartheta \right) $, i.e., $f$ is a
weak solution of (\ref{P1}), see \cite{RadRe}, \cite{Wil}.
\end{corollary}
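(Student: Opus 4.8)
The plan is to invoke the direct method of the calculus of variations, built on the three structural facts already established: $J\in C^{1}$ with the stated weak derivative, $J$ coercive and bounded below (Theorem \ref{coercive}), and $J$ weakly lower semicontinuous (Theorem \ref{l.s.c.}). The first thing I would pin down is the reflexivity of $W_{0}^{1,q(.),p(.)}\left( \Omega ,\vartheta _{0},\vartheta \right)$: it is a closed subspace of $W^{1,q(.),p(.)}\left( \Omega ,\vartheta _{0},\vartheta \right)$, which embeds isometrically (via $f\mapsto (f,\partial_{x_{1}}f,\dots,\partial_{x_{d}}f)$) onto a closed subspace of the product $L^{q(.)}(\Omega,\vartheta_{0})\times L^{p(.)}(\Omega,\vartheta)^{d}$; since $1<q^{-}\le q^{+}<\infty$ and $1<p^{-}\le p^{+}<\infty$ each factor is reflexive, a finite product of reflexive spaces is reflexive, and a closed subspace of a reflexive space is reflexive.

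Next I would set $m=\inf\{J(f):f\in W_{0}^{1,q(.),p(.)}(\Omega,\vartheta_{0},\vartheta)\}$, a real number by Theorem \ref{coercive}, and pick a minimizing sequence $(f_{n})$ with $J(f_{n})\to m$. Coercivity forces $(f_{n})$ to be bounded with respect to $\|\cdot\|_{W_{0}^{1,q(.),p(.)}(\Omega,\vartheta_{0},\vartheta)}$: otherwise a subsequence would have norm tending to $\infty$ and hence $J$-value tending to $\infty$, contradicting $J(f_{n})\to m$. By reflexivity I extract a subsequence, not relabelled, with $f_{n}\rightharpoonup f$; since $W_{0}^{1,q(.),p(.)}(\Omega,\vartheta_{0},\vartheta)$ is closed and convex it is weakly closed, so $f$ belongs to it. Weak lower semicontinuity (Theorem \ref{l.s.c.}) then yields $J(f)\le\liminf_{n\to\infty}J(f_{n})=m$, and since $J(f)\ge m$ by definition of $m$, we conclude $J(f)=m$, i.e. $f$ is a global minimizer of $J$.

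Finally, because $J$ is of class $C^{1}$ on the Banach space $W_{0}^{1,q(.),p(.)}(\Omega,\vartheta_{0},\vartheta)$ and $f$ minimizes $J$, the derivative vanishes at $f$: $J^{\prime}(f)=0$ in $W_{0}^{-1,q^{\prime}(.),p^{\prime}(.)}(\Omega,\vartheta_{0}^{\ast},\vartheta^{\ast})$, that is,
\[
\langle J^{\prime}(f),g\rangle=\int_{\Omega}\left(\vartheta(x)\left\vert\nabla f\right\vert^{p(x)-2}\nabla f\cdot\nabla g-\vartheta_{0}(x)\left\vert f\right\vert^{q(x)-2}fg\right)dx=0
\]
for every $g\in W_{0}^{1,q(.),p(.)}(\Omega,\vartheta_{0},\vartheta)$, which is precisely the definition of a weak solution of (\ref{P1}). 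The only genuinely delicate point is the implication ``$J$ coercive $\Rightarrow$ minimizing sequences are norm-bounded'': this leans on the Poincaré inequality (Theorem \ref{poincare}) and the consequent equivalence of the two norms in Definition \ref{yeninorm}, so that control of $\|\nabla f\|_{L^{p(.)}(\Omega,\vartheta)}$ yields control of the full norm. Once that and the reflexivity observation are in place, the remainder is routine. One may also cite \cite{RadRe}, \cite{Wil} for this standard minimization scheme.
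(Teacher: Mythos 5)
Your proposal is correct and is essentially the argument the paper intends: the paper offers no written proof, simply citing \cite{RadRe} and \cite{Wil} for the standard direct method (coercive, bounded below, weakly lower semicontinuous functional on a reflexive Banach space attains its minimum, and the minimizer is a critical point of the $C^{1}$ functional, hence a weak solution of (\ref{P1})). You have merely written out the details the citation hides, including the reflexivity of $W_{0}^{1,q(.),p(.)}\left( \Omega ,\vartheta _{0},\vartheta \right)$ and the boundedness of minimizing sequences via coercivity, all of which are consistent with the paper's framework.
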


\begin{theorem}
\label{PScon}The operator $J$ satisfies the (PS) condition.
\end{theorem}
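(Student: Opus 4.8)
The plan is to show that every Palais--Smale sequence for $J$ is bounded and then extract a strongly convergent subsequence using the $(S_{+})$ property established in Theorem \ref{pro6}. Recall that $(f_{n})$ is a (PS) sequence if $J(f_{n})$ is bounded and $J^{\prime }(f_{n})\longrightarrow 0$ in $W_{0}^{-1,q^{\prime }(.),p^{\prime }(.)}\left( \Omega ,\vartheta _{0}^{\ast },\vartheta ^{\ast }\right) $.

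First I would prove boundedness of $(f_{n})$. Suppose toward a contradiction that $\left\Vert f_{n}\right\Vert _{W_{0}^{1,q(.),p(.)}\left( \Omega ,\vartheta _{0},\vartheta \right) }\longrightarrow \infty $. Arguing exactly as in the proof of Theorem \ref{coercive}, for $n$ large one has the lower bound
\begin{equation*}
J(f_{n})\geq \frac{1}{p^{+}}\left\Vert f_{n}\right\Vert _{W_{0}^{1,q(.),p(.)}\left( \Omega ,\vartheta _{0},\vartheta \right) }^{p^{-}}-\frac{1}{q^{-}}\left\Vert f_{n}\right\Vert _{W_{0}^{1,q(.),p(.)}\left( \Omega ,\vartheta _{0},\vartheta \right) }^{q^{+}},
\end{equation*}
and since $q^{+}<p^{-}$ the right-hand side tends to $+\infty $, contradicting the boundedness of $J(f_{n})$. (Here I use that the Poincar\'{e} norm $\left\Vert \nabla \cdot \right\Vert _{L^{p(.)}(\Omega ,\vartheta )}$ is equivalent to the full norm, Definition \ref{yeninorm}, together with the modular--norm estimates for $L^{q(.)}(\Omega ,\vartheta _{0})$ and $L^{p(.)}(\Omega ,\vartheta )$.) Hence $(f_{n})$ is bounded.

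Next, since $W_{0}^{1,q(.),p(.)}\left( \Omega ,\vartheta _{0},\vartheta \right) $ is a reflexive Banach space, passing to a subsequence we may assume $f_{n}\rightharpoonup f$ for some $f$ in the space. Writing out the duality pairing,
\begin{equation*}
\left\langle J^{\prime }(f_{n})-J^{\prime }(f),f_{n}-f\right\rangle =\left\langle J^{\prime }(f_{n}),f_{n}-f\right\rangle -\left\langle J^{\prime }(f),f_{n}-f\right\rangle .
\end{equation*}
The second term tends to $0$ because $f_{n}-f\rightharpoonup 0$ and $J^{\prime }(f)$ is a fixed element of the dual. For the first term, $\left\vert \left\langle J^{\prime }(f_{n}),f_{n}-f\right\rangle \right\vert \leq \left\Vert J^{\prime }(f_{n})\right\Vert _{\ast }\left\Vert f_{n}-f\right\Vert _{W_{0}^{1,q(.),p(.)}\left( \Omega ,\vartheta _{0},\vartheta \right) }\longrightarrow 0$ since $\left\Vert J^{\prime }(f_{n})\right\Vert _{\ast }\longrightarrow 0$ and $\left\Vert f_{n}-f\right\Vert $ is bounded. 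Therefore $\limsup_{n\rightarrow \infty }\left\langle J^{\prime }(f_{n})-J^{\prime }(f),f_{n}-f\right\rangle \leq 0$, and by part \textit{(ii)} of Theorem \ref{pro6} ($J^{\prime }$ is of type $(S_{+})$) we conclude $f_{n}\longrightarrow f$ strongly in $W_{0}^{1,q(.),p(.)}\left( \Omega ,\vartheta _{0},\vartheta \right) $. This is exactly the (PS) condition.

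The only place that requires genuine care is the boundedness step: one must make sure the compact embedding of Theorem \ref{comson} (which controls the $\int \vartheta_0 |f|^{q(x)}$ term) and the Poincar\'{e} inequality of Theorem \ref{poincare} are legitimately invoked so that the estimate from Theorem \ref{coercive} transfers verbatim to the sequence. Once $(f_{n})$ is bounded, the $(S_{+})$ machinery of Theorem \ref{pro6} does all the remaining work, so I do not expect any further obstacle.
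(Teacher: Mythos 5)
Your argument is correct as a deduction from results stated in the paper, but it takes a genuinely different route from the paper's own proof of Theorem \ref{PScon}. For boundedness you invoke coercivity (Theorem \ref{coercive}), which uses only $\left\vert J(f_{n})\right\vert \leq M$ and the standing assumption $q^{+}<p^{-}$; the paper instead estimates the Ambrosetti--Rabinowitz-type quantity $J(f_{n})-\frac{1}{\lambda }\left\langle J^{\prime }(f_{n}),f_{n}\right\rangle $ and derives a contradiction with $p^{+}<\lambda $ --- both work under the standing hypotheses, and yours is the shorter of the two. For strong convergence you verify $\limsup_{n\rightarrow \infty }\left\langle J^{\prime }(f_{n})-J^{\prime }(f),f_{n}-f\right\rangle \leq 0$ and then cite the $(S_{+})$ property of Theorem \ref{pro6}\textit{(ii)}, whereas the paper proves the convergence by hand: it uses the compact embedding $W_{0}^{1,q(.),p(.)}\left( \Omega ,\vartheta _{0},\vartheta \right) \hookrightarrow \hookrightarrow L^{q(.)}\left( \Omega ,\vartheta _{0}\right) $ (Theorem \ref{comson}) to show that the $\vartheta _{0}$-term in $\left\langle J^{\prime }(f_{n}),f_{n}-f\right\rangle $ vanishes, and then exploits convexity and weak lower semicontinuity of $\Lambda (f)=\int_{\Omega }\frac{1}{p(x)}\left\vert \nabla f\right\vert ^{p(x)}\vartheta (x)\,dx$ to obtain $\Lambda (f_{n})\rightarrow \Lambda (f)$ and from this norm convergence. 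In effect the paper re-derives exactly the $(S_{+})$-type behaviour that you import wholesale from Theorem \ref{pro6}\textit{(ii)}; note that the paper justifies that theorem only by analogy with Fan--Zhang, whose corresponding result concerns the principal part alone, so the compactness of the lower-order term --- handled explicitly in the paper's proof --- is the real analytic content hidden behind your one-line citation. Your proof is therefore shorter and cleanly modular, while the paper's is self-contained precisely at the point where Theorem \ref{pro6}\textit{(ii)} is least documented.
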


\begin{proof}
Let $\left( f_{n}\right) _{n\in 
%TCIMACRO{\U{2115} }%
%BeginExpansion
\mathbb{N}
%EndExpansion
}$ is a (PS) sequence, i.e.,%
\begin{equation*}
\left\vert J\left( f_{n}\right) \right\vert \leq M
\end{equation*}%
and%
\begin{equation*}
J^{\prime }\left( f_{n}\right) \longrightarrow 0\text{ in }%
W_{0}^{-1,q^{\prime }(.),p^{\prime }(.)}\left( \Omega ,\vartheta _{0}^{\ast
},\vartheta ^{\ast }\right)
\end{equation*}%
where $\frac{1}{p\left( .\right) }+\frac{1}{p^{\prime }\left( .\right) }=1,$ 
$\frac{1}{q\left( .\right) }+\frac{1}{q^{\prime }\left( .\right) }=1,$ $%
\vartheta _{0}^{\ast }=\vartheta _{0}^{1-q\left( .\right) }$ and $\vartheta
^{\ast }=\vartheta ^{1-p\left( .\right) }.$ Now, we want to prove that $%
\left( f_{n}\right) $ has a convergence subsequence. First, we will show
that $\left( f_{n}\right) $ is bounded in $W_{0}^{1,q(.),p(.)}\left( \Omega
,\vartheta _{0},\vartheta \right) .$ To see this, we assume that $\left(
f_{n}\right) $ is not bounded. Hence, we can suppose that $\left\Vert
f_{n}\right\Vert _{W_{0}^{1,q(.),p(.)}\left( \Omega ,\vartheta
_{0},\vartheta \right) }>1$ for all $n\in 
%TCIMACRO{\U{2115} }%
%BeginExpansion
\mathbb{N}
%EndExpansion
.$ This follows that%
\begin{eqnarray*}
&&M+\left\Vert f_{n}\right\Vert _{W_{0}^{1,q(.),p(.)}\left( \Omega
,\vartheta _{0},\vartheta \right) } \\
&\geq &J\left( f_{n}\right) -\frac{1}{\lambda }\left\langle J^{\prime
}\left( f_{n}\right) ,f_{n}\right\rangle \\
&=&\dint\limits_{\Omega }\frac{1}{p\left( x\right) }\left\vert \nabla
f_{n}\right\vert ^{p\left( x\right) }\vartheta \left( x\right)
dx-\dint\limits_{\Omega }\frac{1}{q\left( x\right) }\left\vert
f_{n}\right\vert ^{q\left( x\right) }\vartheta _{0}\left( x\right) dx \\
&&-\frac{1}{\lambda }\dint\limits_{\Omega }\left\vert \nabla
f_{n}\right\vert ^{p\left( x\right) }\vartheta \left( x\right) dx+\frac{1}{%
\lambda }\dint\limits_{\Omega }\left\vert f_{n}\right\vert ^{q\left(
x\right) }\vartheta _{0}\left( x\right) dx \\
&\geq &\left( \frac{1}{p^{+}}-\frac{1}{\lambda }\right) \dint\limits_{\Omega
}\left\vert \nabla f_{n}\right\vert ^{p\left( x\right) }\vartheta \left(
x\right) dx+\left( \frac{1}{\lambda }-\frac{1}{q^{-}}\right)
\dint\limits_{\Omega }\left\vert f_{n}\right\vert ^{q\left( x\right)
}\vartheta _{0}\left( x\right) dx \\
&\geq &\left( \frac{1}{p^{+}}-\frac{1}{\lambda }\right) \left\Vert
f_{n}\right\Vert _{W_{0}^{1,q(.),p(.)}\left( \Omega ,\vartheta
_{0},\vartheta \right) }^{p^{-}}+\left( \frac{1}{\lambda }-\frac{1}{q^{-}}%
\right) \left\Vert f_{n}\right\Vert _{W_{0}^{1,q(.),p(.)}\left( \Omega
,\vartheta _{0},\vartheta \right) }^{q^{+}}
\end{eqnarray*}%
or equivalently%
\begin{eqnarray*}
&&M+\left\Vert f_{n}\right\Vert _{W_{0}^{1,q(.),p(.)}\left( \Omega
,\vartheta _{0},\vartheta \right) }+\left( \frac{1}{q^{-}}-\frac{1}{\lambda }%
\right) \left\Vert f_{n}\right\Vert _{W_{0}^{1,q(.),p(.)}\left( \Omega
,\vartheta _{0},\vartheta \right) }^{q^{+}} \\
&\geq &\left( \frac{1}{p^{+}}-\frac{1}{\lambda }\right) \left\Vert
f_{n}\right\Vert _{W_{0}^{1,q(.),p(.)}\left( \Omega ,\vartheta
_{0},\vartheta \right) }^{p^{-}}
\end{eqnarray*}%
Therefore, we have $\lambda \leq p^{+}$ which is a contradiction. This means
that $\left( f_{n}\right) $ is bounded in $W_{0}^{1,q(.),p(.)}\left( \Omega
,\vartheta _{0},\vartheta \right) .$ By this boundedness, there exists a
subsequence $f\in W_{0}^{1,q(.),p(.)}\left( \Omega ,\vartheta _{0},\vartheta
\right) $ such that $f_{n}\rightharpoonup f$ in $W_{0}^{1,q(.),p(.)}\left(
\Omega ,\vartheta _{0},\vartheta \right) .$ Now, we will show that there is
a subsequence $f\in W_{0}^{1,q(.),p(.)}\left( \Omega ,\vartheta
_{0},\vartheta \right) $ such that $f_{n}\longrightarrow f$ in $%
W_{0}^{1,q(.),p(.)}\left( \Omega ,\vartheta _{0},\vartheta \right) .$ Since $%
W_{0}^{1,q\left( .\right) ,p\left( .\right) }\left( \Omega ,\vartheta
_{0},\vartheta \right) \hookrightarrow \hookrightarrow L^{q\left( .\right)
}\left( \Omega ,\vartheta _{0}\right) $ holds, it is clear that $%
\left\langle J^{\prime }\left( f_{n}\right) ,f_{n}-f\right\rangle
\longrightarrow 0$ as $n\longrightarrow \infty .$ Moreover, we have%
\begin{eqnarray}
\left\langle J^{\prime }\left( f_{n}\right) ,f_{n}-f\right\rangle
&=&\dint\limits_{\Omega }\left\vert \nabla f_{n}\right\vert ^{p\left(
x\right) }\vartheta \left( x\right) dx-\dint\limits_{\Omega }\left\vert
\nabla f_{n}\right\vert ^{p\left( x\right) -2}\nabla f_{n}\nabla f\vartheta
\left( x\right) dx  \notag \\
&&-\dint\limits_{\Omega }\left\vert f_{n}\right\vert ^{q\left( x\right)
-2}f_{n}\left( f_{n}-f\right) \vartheta _{0}\left( x\right) dx.  \label{Jfn}
\end{eqnarray}%
Again, if we consider $W_{0}^{1,q\left( .\right) ,p\left( .\right) }\left(
\Omega ,\vartheta _{0},\vartheta \right) \hookrightarrow \hookrightarrow
L^{q\left( .\right) }\left( \Omega ,\vartheta _{0}\right) ,$ we get that $%
f_{n}\longrightarrow f$ in $L^{q\left( .\right) }\left( \Omega ,\vartheta
_{0}\right) .$ This follows by \cite{Sam1} that%
\begin{eqnarray*}
&&\left\vert \dint\limits_{\Omega }\left\vert f_{n}\right\vert ^{q\left(
.\right) -2}f_{n}\left( f_{n}-f\right) \vartheta _{0}\left( x\right)
dx\right\vert \\
&\leq &C_{1}\left\Vert \vartheta _{0}\right\Vert _{L^{\alpha \left( .\right)
}\left( \Omega \right) }\left\Vert \left\vert f_{n}\right\vert ^{q\left(
.\right) -1}\right\Vert _{L^{\beta \left( .\right) }\left( \Omega ,\vartheta
_{0}\right) }\left\Vert f_{n}-f\right\Vert _{L^{q\left( .\right) }\left(
\Omega ,\vartheta _{0}\right) } \\
&\leq &C_{2}\left\Vert f_{n}\right\Vert _{W_{0}^{1,q\left( .\right) ,p\left(
.\right) }\left( \Omega ,\vartheta _{0},\vartheta \right) }\left\Vert
f_{n}-f\right\Vert _{L^{q\left( .\right) }\left( \Omega ,\vartheta
_{0}\right) }\longrightarrow 0
\end{eqnarray*}%
where $\frac{1}{\alpha \left( .\right) }+\frac{1}{\beta \left( .\right) }+%
\frac{1}{q\left( .\right) }=1.$ By (\ref{Jfn}), we have%
\begin{equation*}
\lim_{n\longrightarrow \infty }\left( \dint\limits_{\Omega }\left\vert
\nabla f_{n}\right\vert ^{p\left( x\right) }\vartheta \left( x\right)
dx-\dint\limits_{\Omega }\left\vert \nabla f_{n}\right\vert ^{p\left(
x\right) -2}\nabla f_{n}\nabla f\vartheta \left( x\right) dx\right) =0.
\end{equation*}%
Now, we denote $\Lambda \left( f\right) =\dint\limits_{\Omega }\frac{1}{%
p\left( x\right) }\left\vert \nabla f\right\vert ^{p\left( x\right)
}\vartheta \left( x\right) dx$, for convenience. It is obvious that $\Lambda 
$ is a convex functional. Then there exist a $t\in \left[ 0,1\right] $ such
that%
\begin{eqnarray*}
\frac{\Lambda \left( g+t\left( f-g\right) \right) -\Lambda \left( g\right) }{%
t} &\leq &\frac{\left( 1-t\right) \Lambda \left( g\right) +t\Lambda \left(
f\right) -\Lambda \left( g\right) }{t} \\
&=&\Lambda \left( f\right) -\Lambda \left( g\right) .
\end{eqnarray*}%
This yields%
\begin{eqnarray*}
\left\langle \Lambda ^{\prime }\left( g\right) ,f-g\right\rangle
&=&\lim_{t\longrightarrow 0}\frac{\Lambda \left( g+t\left( f-g\right)
\right) -\Lambda \left( g\right) }{t} \\
&\leq &\Lambda \left( f\right) -\Lambda \left( g\right)
\end{eqnarray*}%
or equivalently%
\begin{equation*}
\left\langle \Lambda ^{\prime }\left( g\right) ,f-g\right\rangle \leq
\Lambda \left( f\right) -\Lambda \left( g\right)
\end{equation*}%
This follows that%
\begin{equation}
0=\lim_{n\longrightarrow \infty }\left\langle \Lambda ^{\prime }\left(
f_{n}\right) ,f-f_{n}\right\rangle \leq \Lambda \left( f\right)
-\lim_{n\longrightarrow \infty }\Lambda \left( f_{n}\right) .
\label{altyari}
\end{equation}%
Moreover, it is easy to see that $\Lambda $ is weakly lower semicontinuous.
This follows by (\ref{altyari}) that%
\begin{equation*}
\lim_{n\longrightarrow \infty }\Lambda \left( f_{n}\right) =\Lambda \left(
f\right) .
\end{equation*}%
Now, we are ready to prove that $f_{n}\longrightarrow f$ in $%
W_{0}^{1,q(.),p(.)}\left( \Omega ,\vartheta _{0},\vartheta \right) .$ Assume
that the sequence $\left( f_{n}\right) $ is not convergent to $\left(
f\right) $ in $W_{0}^{1,q(.),p(.)}\left( \Omega ,\vartheta _{0},\vartheta
\right) .$ Thus, for $\varepsilon _{1}>0,$ there exists a subsequence $%
\left( f_{n_{k}}\right) $ of $\left( f_{n}\right) $ such that $\left\Vert
f_{n_{k}}-f\right\Vert _{W_{0}^{1,q\left( .\right) ,p\left( .\right) }\left(
\Omega ,\vartheta _{0},\vartheta \right) }\geq \varepsilon _{1}.$ Since $%
\Lambda $ is convex functional, we have%
\begin{equation}
\limsup_{n\longrightarrow \infty }\Lambda \left( \frac{f_{n_{k}}+f}{2}%
\right) \leq \Lambda \left( f\right) .  \label{limsup}
\end{equation}%
Moreover, it is clear that $\left( \frac{f_{n_{k}}+f}{2}\right)
\rightharpoonup f$ in $W_{0}^{1,q(.),p(.)}\left( \Omega ,\vartheta
_{0},\vartheta \right) .$ Since $\Lambda $ is weakly lower semicontinuous,
we have%
\begin{equation*}
\Lambda \left( f\right) \leq \liminf_{n\longrightarrow \infty }\Lambda
\left( \frac{f_{n_{k}}+f}{2}\right)
\end{equation*}%
which is a contradiction in sense to (\ref{limsup}). This follows that $%
f_{n}\longrightarrow f$ in $W_{0}^{1,q(.),p(.)}\left( \Omega ,\vartheta
_{0},\vartheta \right) .$ This completes the proof.
\end{proof}

\begin{theorem}
Let $p^{+}<q^{-}<\lambda .$ Then, the Problem (P1) has a nontrivial weak
solution.
\end{theorem}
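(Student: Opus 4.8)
The plan is to obtain a nontrivial critical point of $J$ by the Mountain Pass Theorem instead of by minimization: as soon as $p^{+}<q^{-}$ the subtracted term dominates, so $J$ fails to be coercive and Theorem \ref{coercive} is unavailable, but the very inequality $q^{-}>p^{+}$ supplies both the mountain-pass geometry and the Ambrosetti--Rabinowitz compactness. I would start from what is already on record: $J\in C^{1}\!\left(W_{0}^{1,q(.),p(.)}\left(\Omega ,\vartheta _{0},\vartheta \right),\mathbb{R}\right)$, $J(0)=0$, and the critical points of $J$ are exactly the weak solutions of (\ref{P1}); it then suffices to produce a critical point at a strictly positive energy level.

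First I would re-run the Palais--Smale verification of Theorem \ref{PScon} with one change. Given a (PS) sequence $\left( f_{n}\right)$, $\left\vert J(f_{n})\right\vert \leq M$ and $J^{\prime }(f_{n})\rightarrow 0$, I would subtract $\tfrac{1}{q^{-}}\left\langle J^{\prime }(f_{n}),f_{n}\right\rangle$ rather than $\tfrac{1}{\lambda }\left\langle J^{\prime }(f_{n}),f_{n}\right\rangle$, obtaining
\begin{equation*}
J(f_{n})-\tfrac{1}{q^{-}}\left\langle J^{\prime }(f_{n}),f_{n}\right\rangle =\dint\limits_{\Omega }\Big(\tfrac{1}{p(x)}-\tfrac{1}{q^{-}}\Big)\left\vert \nabla f_{n}\right\vert ^{p(x)}\vartheta (x)\,dx+\dint\limits_{\Omega }\Big(\tfrac{1}{q^{-}}-\tfrac{1}{q(x)}\Big)\left\vert f_{n}\right\vert ^{q(x)}\vartheta _{0}(x)\,dx,
\end{equation*}
in which \emph{both} integrands are now nonnegative, the first being at least $\big(\tfrac{1}{p^{+}}-\tfrac{1}{q^{-}}\big)\left\vert \nabla f_{n}\right\vert ^{p(x)}\vartheta (x)$ because $p^{+}<q^{-}$. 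Hence $M+\left\Vert f_{n}\right\Vert _{W_{0}^{1,q(.),p(.)}\left( \Omega ,\vartheta _{0},\vartheta \right) }\geq \big(\tfrac{1}{p^{+}}-\tfrac{1}{q^{-}}\big)\varrho _{p(.),\vartheta }(\nabla f_{n})$ for large $n$, and the norm-modular inequalities together with $p^{-}>1$ force $\left( f_{n}\right)$ to be bounded. Then, passing to a weakly convergent subsequence $f_{n}\rightharpoonup f$, I would note that $\left\langle J^{\prime }(f_{n}),f_{n}-f\right\rangle \rightarrow 0$ (because $J^{\prime }(f_{n})\rightarrow 0$ and $\left( f_{n}\right)$ is bounded) and $\left\langle J^{\prime }(f),f_{n}-f\right\rangle \rightarrow 0$ (weak convergence), so $\limsup _{n}\left\langle J^{\prime }(f_{n})-J^{\prime }(f),f_{n}-f\right\rangle \leq 0$, whence $f_{n}\rightarrow f$ strongly by the $\left( S_{+}\right)$ property of Theorem \ref{pro6}(ii); this last step relies on the compact embedding $W_{0}^{1,q(.),p(.)}\left( \Omega ,\vartheta _{0},\vartheta \right) \hookrightarrow \hookrightarrow L^{q(.)}\left( \Omega ,\vartheta _{0}\right)$, which, as in the proof of Theorem \ref{PScon}, holds in the present range since $q^{-}<\lambda$.

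Next I would check the mountain-pass geometry using Definition \ref{yeninorm} and the Poincar\'{e} inequality (\ref{3}). Near the origin, for $\left\Vert \left\vert f\right\vert \right\Vert _{W_{0}^{1,q(.),p(.)}\left( \Omega ,\vartheta _{0},\vartheta \right) }=\rho <1$ the norm-modular inequalities give $J(f)\geq \tfrac{1}{p^{+}}\rho ^{p^{+}}-\tfrac{1}{q^{-}}\varrho _{q(.),\vartheta _{0}}(f)$, while (\ref{3}) gives $\left\Vert f\right\Vert _{L^{q(.)}(\Omega ,\vartheta _{0})}\leq C\rho$, so $\varrho _{q(.),\vartheta _{0}}(f)\leq (C\rho )^{q^{-}}$ once $C\rho <1$; therefore
\begin{equation*}
J(f)\geq \rho ^{p^{+}}\Big(\tfrac{1}{p^{+}}-\tfrac{C^{q^{-}}}{q^{-}}\,\rho ^{\,q^{-}-p^{+}}\Big),
\end{equation*}
and since $q^{-}-p^{+}>0$ I would fix $\rho >0$ so small that $J\geq \alpha :=\tfrac{1}{2p^{+}}\rho ^{p^{+}}>0$ on the sphere $\left\{ \left\Vert \left\vert f\right\vert \right\Vert =\rho \right\}$. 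Away from the origin, I would fix $w\in C_{0}^{\infty }(\Omega )\setminus \{0\}$ and use $t^{p(x)}\leq t^{p^{+}}$, $t^{q(x)}\geq t^{q^{-}}$ for $t\geq 1$ to get
\begin{equation*}
J(tw)\leq \frac{t^{p^{+}}}{p^{-}}\,\varrho _{p(.),\vartheta }(\nabla w)-\frac{t^{q^{-}}}{q^{+}}\,\varrho _{q(.),\vartheta _{0}}(w)\longrightarrow -\infty \qquad (t\rightarrow \infty ),
\end{equation*}
since $\varrho _{q(.),\vartheta _{0}}(w)>0$ and $q^{-}>p^{+}$; then pick $T>0$ with $\left\Vert \left\vert Tw\right\vert \right\Vert >\rho $ and $J(Tw)<0$, and set $e:=Tw$. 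Finally, with $\Gamma =\{\gamma \in C([0,1],W_{0}^{1,q(.),p(.)}\left( \Omega ,\vartheta _{0},\vartheta \right) ):\gamma (0)=0,\ \gamma (1)=e\}$ and $c=\inf _{\gamma \in \Gamma }\max _{s\in [0,1]}J(\gamma (s))$, the three preceding steps verify the hypotheses of the Mountain Pass Theorem (see \cite{RadRe}, \cite{Wil}), so $c\geq \alpha >0$ is a critical value of $J$: there is $f\in W_{0}^{1,q(.),p(.)}\left( \Omega ,\vartheta _{0},\vartheta \right)$ with $J^{\prime }(f)=0$ and $J(f)=c>0=J(0)$, hence $f\neq 0$, and $f$ is a nontrivial weak solution of (\ref{P1}). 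I expect the main obstacle to be the (PS) verification: the coercivity-driven compactness of the earlier sections is gone, and the role previously played by $q^{+}<p^{-}$ in Theorem \ref{PScon} has to be taken over by $q^{-}>p^{+}$ -- equivalently, by an Ambrosetti--Rabinowitz condition with parameter $\mu =q^{-}$ -- after which the $\left( S_{+}\right)$ argument and the embedding estimates transfer unchanged.
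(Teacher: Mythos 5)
Your proposal is correct, and its skeleton coincides with the paper's: both verify the mountain-pass geometry ($J>0$ on a small sphere because $p^{+}<q^{-}$, and $J(tw)\rightarrow -\infty$ along rays) and then invoke the Mountain Pass Theorem, so the critical point obtained at a positive level is a nontrivial weak solution of (\ref{P1}). The genuine difference is in the Palais--Smale step, and there your version is the stronger one. The paper simply cites Theorem \ref{PScon}; but the boundedness argument in that proof subtracts $\frac{1}{\lambda }\left\langle J^{\prime }(f_{n}),f_{n}\right\rangle $ and reaches its contradiction only because, under the standing ordering $q^{+}<p^{-}$, the power $p^{-}$ on the right dominates the power $q^{+}$ on the left; in the present regime $p^{+}<q^{-}\leq q^{+}$ that domination fails, so Theorem \ref{PScon} cannot be quoted verbatim. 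Your re-derivation with the Ambrosetti--Rabinowitz parameter $\mu =q^{-}$ (subtracting $\frac{1}{q^{-}}\left\langle J^{\prime }(f_{n}),f_{n}\right\rangle $, which makes both integrands nonnegative) is exactly the repair needed, and your conclusion of strong convergence via the $(S_{+})$ property of Theorem \ref{pro6}(ii) is shorter than the paper's convexity/weak lower semicontinuity argument for $\Lambda $ --- indeed, as you set it up, that step does not even require the compact embedding you mention. You are also more careful than the paper near the origin, where the paper suppresses the Poincar\'{e} constant from (\ref{3}) when passing from $\varrho _{q(.),\vartheta _{0}}(f)$ to powers of the $W_{0}^{1,q(.),p(.)}$-norm; your explicit bound $\rho ^{p^{+}}\bigl(\frac{1}{p^{+}}-\frac{C^{q^{-}}}{q^{-}}\rho ^{q^{-}-p^{+}}\bigr)$ fixes that. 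The one caveat, which applies equally to the paper, is that Theorem \ref{pro6} was stated under the standing assumption $q^{+}<p^{-}$, so you should remark that its proof is insensitive to the ordering of the two exponents before using the $(S_{+})$ property in the regime $p^{+}<q^{-}$.
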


\begin{proof}
For this theorem, our motivation is based on Mountain Pass Theorem (see \cite%
{Wil}). By Theorem \ref{PScon}, the energy functional $J$ satisfies (PS)
condition on $W_{0}^{1,q(.),p(.)}\left( \Omega ,\vartheta _{0},\vartheta
\right) .$ If we consider the \cite[Proposition 2.4]{Liu} (or \cite{A2}),
then we have%
\begin{eqnarray*}
J\left( f\right) &=&\dint\limits_{\Omega }\frac{1}{p\left( x\right) }%
\left\vert \nabla f\right\vert ^{p\left( x\right) }\vartheta \left( x\right)
dx-\dint\limits_{\Omega }\frac{1}{q\left( x\right) }\left\vert f\right\vert
^{q\left( x\right) }\vartheta _{0}\left( x\right) dx \\
&\geq &\frac{1}{p^{+}}\dint\limits_{\Omega }\left\vert \nabla f\right\vert
^{p\left( x\right) }\vartheta \left( x\right) dx-\frac{1}{q^{-}}%
\dint\limits_{\Omega }\left\vert f\right\vert ^{q\left( x\right) }\vartheta
_{0}\left( x\right) dx \\
&\geq &\frac{1}{p^{+}}\left\Vert f\right\Vert _{W_{0}^{1,q(.),p(.)}\left(
\Omega ,\vartheta _{0},\vartheta \right) }^{p^{+}}-\frac{1}{q^{-}}\left\Vert
f\right\Vert _{W_{0}^{1,q(.),p(.)}\left( \Omega ,\vartheta _{0},\vartheta
\right) }^{q^{-}} \\
&\geq &\left( \frac{1}{p^{+}}-\frac{1}{q^{-}}\right) \left\Vert f\right\Vert
_{W_{0}^{1,q(.),p(.)}\left( \Omega ,\vartheta _{0},\vartheta \right)
}^{q^{-}}>0
\end{eqnarray*}%
for $\left\Vert f\right\Vert _{W_{0}^{1,q(.),p(.)}\left( \Omega ,\vartheta
_{0},\vartheta \right) }\leq 1.$ Thus, when $\left\Vert f\right\Vert
_{W_{0}^{1,q(.),p(.)}\left( \Omega ,\vartheta _{0},\vartheta \right) }=\rho $
sufficiently small, we have $J\left( f\right) >0.$ Moreover, since $%
p^{+}<q^{-}$, we get%
\begin{eqnarray*}
J\left( tg\right) &=&\dint\limits_{\Omega }\frac{1}{p\left( x\right) }%
\left\vert t\nabla g\right\vert ^{p\left( x\right) }\vartheta \left(
x\right) dx-\dint\limits_{\Omega }\frac{1}{q\left( x\right) }\left\vert
tg\right\vert ^{q\left( x\right) }\vartheta _{0}\left( x\right) dx \\
&\leq &t^{p^{+}}\dint\limits_{\Omega }\left\vert \nabla g\right\vert
^{p\left( x\right) }\vartheta \left( x\right)
dx-t^{q^{-}}\dint\limits_{\Omega }\left\vert g\right\vert ^{q\left( x\right)
}\vartheta _{0}\left( x\right) dx\longrightarrow -\infty
\end{eqnarray*}%
as $t\longrightarrow \infty $ for $g\in W_{0}^{1,q(.),p(.)}\left( \Omega
,\vartheta _{0},\vartheta \right) -\left\{ 0\right\} .$ It is note that $%
J\left( 0\right) =0.$ This follows that $J$ satisfies the geometric
conditions of the Mountain Pass Theorem (see \cite{Calo}, \cite{Napo}, \cite%
{Wil}), and the operator $J$ admits at least one nontrivial critical point.
\end{proof}

\bigskip

\bigskip


\begin{thebibliography}{99}
\bibitem{Ac} Acerbi, E., Mingione, G., Regularity results for a class of
functionals with non-standard growth. Arch. Ration. Mech. and Anal. 156,
(2001), 121-140.

\bibitem{Ada} Adams, R. A., Fournier, J. J. F., Sobolev Spaces (2nd Ed.).
Academic Press, 305, 2003.

\bibitem{A1} Ayd\i n, I., On variable exponent amalgam spaces. An. St. Univ.
Ovidius Constanta Ser. Mat. 20(3), (2012).

\bibitem{A2} Ayd\i n, I., Weighted variable Sobolev spaces and capacity. J.
Funct. Spaces Appl., 2012, Article ID 132690, 17 pages,
doi:10.1155/2012/132690, (2012).

\bibitem{Bui} Bui, T. A., Duong, X. T., Weighted variable exponent Sobolev
estimates for elliptic equations with non-standard growth and measure data.
Nonlinear Differ. Equ. Appl., 25:28, (2018).

\bibitem{Calo} Calot\u{a}, L., On some quasilinear elliptic equations with
critical Sobolev exponents and non-standard growth conditions. Bull. Belg.
Math. Soc. Simon Stevin 15(2), (2008), 249-256.

\bibitem{Car} Cartan, H., The\'{o}rie g\'{e}n\'{e}ral du balayage en
potential newtonien. Ann. Univ. Grenoble Math. Phys. 22, (1946), 221-280.

\bibitem{Cruz} Cruz-Uribe, D. V., Fiorenza, A., Variable Lebesgue
Spaces-Foundations and Harmonic Analysis. Birkh\"{a}user/Springer, New York,
2013.

\bibitem{Dien} Diening, L., Maximal function on generalized Lebesgue spaces $%
L^{p\left( .\right) }.$ Math. Inequal. Appl. 7(2), (2004), 245-253.

\bibitem{Dien1} Diening, L., Riesz potential and Sobolev embeddings on
generalized Lebesgue and Sobolev spaces $L^{p\left( .\right) }$ and $%
W^{k,p\left( .\right) }.$ Math. Nachr. 268, (2004), 31-43.

\bibitem{Dien4} Diening, L., Harjulehto, P., H\"{a}st\"{o}, P., R\r{u}\v{z}i%
\v{c}ka, M., Lebesgue and Sobolev Spaces with Variable Exponents.
Springer-Verlag, Berlin, 2011.

\bibitem{FanZ} Fan, X., Zhang Q., Existence of solutions for $p\left(
x\right) $-Laplacian Dirichlet problem. Nonlinear Anal. 52, (2003),
1843-1852.

\bibitem{Fan} Fan, X., Zhao, D., On the spaces $L^{p\left( x\right) }\left(
\Omega \right) $ and $W^{k,p\left( x\right) }\left( \Omega \right) .$ J.
Math. Anal. Appl. 263(2), (2001), 424-446.

\bibitem{Gold} Gol'dshtein, V., Motreanu, V. V., Ukhlov, A., Embeddings of
weighted Sobolev spaces and degenerate Dirichlet problems involving the
weighted $p$-Laplacian. Complex Var. Elliptic Equ., 56(10-11), (2011),
905-930.

\bibitem{Har4} Harjulehto, P., H\"{a}st\"{o}, P., Koskenoja, M., Varonen,
S., The Dirichlet energy integral and variable exponent Sobolev spaces with
zero boundary values. Potential Anal. 25, (2006), 205-222.

\bibitem{Hei} Heinonen, J., Kilpel\"{a}inen, T., Martio, O., Nonlinear
Potential Theory of Degenerate Elliptic Equations. Oxford University Press,
Oxford, 1993.

\bibitem{Ilias} Ilia\c{s}, P. S., Dirichlet problem with $p\left( x\right) $%
-Laplacian. Math. Rep. 10(60)(1), (2008), 43-56.

\bibitem{Kel} Kellogg, O. D., Foundations of Potential Theory, Springer,
1929.

\bibitem{Kim} Kim, Y., Wang, L., Zhang, C., Global bifurcation for a class
of degenerate elliptic equations with variable exponents. J. Math. Anal.
Appl. 371, (2010), 624-637.

\bibitem{Kok} Kokilashvili, V., Samko, S., Singular integrals in weighted
Lebesgue spaces with variable exponent. Georgian Math. J. 10(1), (2003),
145-156.

\bibitem{K} Kov\'{a}\v{c}ik, O., R\'{a}kosn\'{\i}k, J., On spaces $L^{p(x)}$
and $W^{k,p(x)}$. Czechoslovak Math. J. 41(116)(4), (1991), 592-618.

\bibitem{Kuf} Kufner, A., Weighted Sobolev spaces. John Wiley\&Sons, New
York, 1985.

\bibitem{Liu} Liu, Q., Compact trace in weighted variable exponent Sobolev
spaces $W^{1,p\left( x\right) }\left( \Omega ;\nu _{0},\nu _{1}\right) .$ J.
Math. Anal. Appl. 348, (2008), 760-774.

\bibitem{LiuLiu} Liu, Q., Liu, D., Existence and multiplicity of solutions
to a $p\left( x\right) $-Laplacian equation with nonlinear boundary
condition on unbounded domain. Differ. Equ. Appl. 5(4), (2013), 595-611.

\bibitem{MasO} Mashiyev, R. A., Ogras, S., Yucedag, Z., Avci, M., The Nehari
manifold approach for Dirichlet problem involving the $p\left( x\right) $%
-Laplacian equation. J. Korean Math. Soc. 47(4), (2010), 845-860.

\bibitem{Maz} Mazya, V., Sobolev Spaces. Springer-Verlag, Berlin,
Heidelberg, 1985.

\bibitem{Napo} Napoli, P. D., Mariani, M. C., Mountain pass solutions to
equations of $p$-Laplacian type. Nonlinear Anal. 54, (2003), 1205-1219.

\bibitem{Ohno} Ohno, T., Compact embeddings in the generalized Sobolev space 
$W_{0}^{1,p\left( .\right) }\left( G\right) $ and existence of solutions for
nonlinear elliptic problems. Nonlinear Anal. 71, (2009), 1534-1541.

\bibitem{RadRe} R\u{a}dulescu, V. D., Repov\v{s}, D. D., Partial
Differential Equations With Variable Exponents. CRC\ Press, 2015.

\bibitem{Ruz} R\r{u}\v{z}i\v{c}ka, M., Electrorheological Fluids: Modelling
and Mathematical Theory. Springer, Berlin Heidelberg, New York, 2000.

\bibitem{Sai} Saiedinezhad, S., Ghaemi, M. B., The fibering map approach to
a quasilinear degenerate $p\left( x\right) $-Laplacian equation. Bull.
Iranian Math. Soc. 41(6), (2015) 1477-1492.

\bibitem{Sam1} Samko, S. G., Differentiation and integration of variable
order and the spaces $L^{p\left( x\right) }.$ In Operator Theory and Complex
and Hypercomplex Analysis, vol. 212, Contemp. Math., (1998) 203-219.

\bibitem{Unal2} Unal, C., Ayd\i n, I., Weighted variable exponent Sobolev
spaces with zero boundary values and capacity estimates. Sigma J. Eng. \&
Nat. Sci. 36(2), (2018), 373-388.

\bibitem{UnalAy} Unal, C., Ayd\i n, I., Compact embeddings on a subspace of
weighted variable exponent Sobolev spaces. Adv. Oper. Theory 4(2), (2019),
388-405.

\bibitem{Wil} Willem, M., Minimax Theorems. Birkhauser, Boston, 1996.

\bibitem{Zhi} Zhikov, V. V., Averaging of functionals of the calculus of
variations and elasticity theory, Mathematics of the USSR-Izvestiya 29(1),
(1987), 33-66.
\end{thebibliography}
\end{document}